\newtheorem{theorem}{Theorem}
\newtheorem*{theorem*}{Theorem}
\newtheorem*{conjecture*}{Conjecture}
\newtheorem{corollary}[theorem]{Corollary}
\newtheorem{lemma}[theorem]{Lemma}
\newtheorem{proposition}[theorem]{Proposition}
\newtheorem{definition}[theorem]{Definition}
\newtheorem{claim}[theorem]{Claim}
\newtheorem*{claim*}{Claim}
\newtheorem{fact}[theorem]{Fact}
\newcommand\numberthis{\addtocounter{equation}{1}\tag{\theequation}}
\newcommand{\E}{\mathbb{E}}
\newcommand{\Prob}{\mathbb{P}}
\begin{document}

\title{Enumerating independent sets in Abelian Cayley graphs}

\author{
  Aditya Potukuchi}\thanks{ 
    	Department of Mathematics, Statistics, and Computer Science, University of Illinois at Chicago, 851 S Morgan St, Chicago, IL, USA.
    	Email: \texttt{adityap}@\texttt{uic.edu}. Research supported in part by NSF grant CCF-1934915.
    	}
\author{
     	Liana Yepremyan}\thanks{ 
    	Department of Mathematics, London School of Economics, London WC2A 2AE, UK.
    	Email: \texttt{l.yepremyan}@\texttt{lse.ac.uk}. Research supported by Marie Sklodowska Curie Global Fellowship, H2020-MSCA-IF-2018:846304.
	}

\date{\today}

\maketitle

\begin{abstract}
We show that any connected Cayley graph $\Gamma$ on an Abelian group of order $2n$ and degree $\tilde{\Omega}(\log n)$ has at most $2^{n+1}(1 + o(1))$ independent sets. This bound is tight up to to the $o(1)$ term when $\Gamma$ is bipartite. Our proof is based on Sapozhenko's graph container method and uses the Pl\"{u}nnecke-Rusza-Petridis inequality from additive combinatorics.
\end{abstract}


\section{Introduction}

An \emph{independent set} in a graph is a set of vertices with no two having an edge between them. For a graph $G$, let $i(G)$ denote the number of independent sets in a graph $G$. The study of $i(G)$ in a $d$-regular graph on a given number of vertices goes back to Granville, who was interested in this quantity because of connections to combinatorial group theory.  In 1988 at a Number Theory Conference in Banff, he suggested that if $G$ is a $d$-regular graph on $2n$ vertices then $i(G)\leq 2^{(1+o(1))n}$, where the $o(1)$ term goes to zero as $d$ goes to infinity. Note that this is tight up to the $o(1)$ term since a bipartite $d$-regular graph has at least  $2^{n +1}-1$ independent sets, just by counting all subsets of both sides in the bipartition. Alon~\cite{alon91} settled this conjecture and proved that $i(G) \leq 2^{\left(1+O(d^{-0.1})\right)n}$. He also suggested that the right bound is $(2^{d+1}-1)^{n/d}$, achieved by a disjoint union of $n/d$ complete bipartite graphs $K_{d,d}$, whenever $d$ divides $n$. This was later also conjectured by Kahn~\cite{kahn2001} and proved for bipartite graphs. The full conjecture was proved by Zhao~\cite{ZHAO10} who showed that the bound for general $d$-regular graphs follows from the bipartite version.

For irregular graphs, Kahn conjectured~\cite{kahn2001} that a similar bound must hold, more precisely, 
 $i(G)\leq \prod_{uv\in E(G)}i(K_{d_G(u),d_G(v)})^{1/d_G(u)d_G(v)},$ and equality holds when $G$ is a union of vertex disjoint complete bipartite graphs with appropriate sizes. It was first proved to be true for all graphs of maximum degree at most $5$ by Galvin and Zhao~\cite{GZ2011} with computer assistance. The full conjecture was recently proved by  Sah, Sawhney, Stoner and Zhao~\cite{SSSZ19} using a H\"older-type inequality. 
  

Back to $d$-regular graphs, the results of Kahn and Zhao show that for any $d$-regular graph on $2n$ vertices, $i(G) \leq 2^{n + \frac{n}{d} + o(1)}$, with the extremal example being a union of complete bipartite graphs. So, a natural question is weather $i(G)$ is much smaller if we require the extremal graph to  have higher edge-connectivity. More specifically, can the $n/d$ term be significantly reduced? The answer is no, and in Appendix~\ref{sec:conn}, we describe the construction of a $d$-regular graph on $2n$ vertices that is at least $(d-1)$ edge-connected and has at least $2^{n + \Omega(n/d)}$ independent sets.

However, if we require the extremal graph to have some stronger connectivity properties, such as being an expander, then more is known. For example,  for  the $d$-dimensional discrete hypercube, $Q_d$, that is the graph on vertex set $\{0,1\}^d$ where two vertices are adjacent if they differ in exactly one coordinate, Korshunov and Sapozhenko~\cite{KS83} proved that  $i(Q_d)=2\sqrt{e} (1+o(1)) \cdot 2^{2^{d-1}}$, when $d\rightarrow \infty$.  Sapozhenko~\cite{SAPOZHENKO87} using the container method gave a simplified proof of this result (see ~\cite{GALVIN19} for a beautiful exposition of this method). The ideas introduced in this method have proved to be extremely useful, finding a number of applications in combinatorics. upper bound on phase transition on the hardcore model on $\mathbb{Z}^d$~\cite{GK04}, lower bounds for mixing for Glauber dynamics for hardcore model in bipartite regular graphs~\cite{GT06}, enumerating uniform intersecting set systems~\cite{BGLW21}, enumerating $q$-colorings of the discrete torus~\cite{GALVIN03},~\cite{KP20a},~\cite{JK20}, phase coexistence of the $3$-coloring model in $\mathbb{Z}^d$~\cite{GKRS15}, more detailed descriptions of independent sets in the hypercube~\cite{BGL21},~\cite{GALVIN10},~\cite{JP20},~\cite{JPP21a},~\cite{KP19},~\cite{PARK21}, and faster algorithms for approximately counting independent sets in bipartite graphs~\cite{JPP21b}.

\section{Our results}

The motivation of this paper is to consider the question of determining $i(G)$ for families of graphs with some underlying structure. A natural example are Cayley graphs. Let $\mathcal{F}$ denote a finite Abelian group. An \emph{Abelian Cayley graph} for $\mathcal{F}$ with generator set $D\subseteq \mathcal{F}$, is a graph whose vertices are given by the elements of $\mathcal{F}$, and (directed) edges $\{(u,u+x)~|~ x \in D\}$. If $D = -D$, then we may assume the graph is undirected. Our main result is the following.

\begin{theorem}\label{thm:main} 
Let $\Gamma$ be a connected Abelian Cayley graph on $2n$ vertices and degree \linebreak $\Omega(\log n \cdot (\log\log{n})^{11})$. Then,
\[
i(\Gamma) \leq 2^{n+1} \cdot \left(1+ o(1)\right).
\]
\end{theorem}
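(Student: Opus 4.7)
The plan is to follow Sapozhenko's graph container method, with the Pl\"unnecke--Rusza--Petridis (PRP) inequality supplying the isoperimetric input that edge-isoperimetric inequalities supply in the hypercube proof of~\cite{SAPOZHENKO87}.

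\textbf{Reduction to the bipartite case.} If $\Gamma$ is not bipartite, I pass to the bipartite double cover $\Gamma' = \Gamma \times K_2$. This is a bipartite Abelian Cayley graph on $\mathcal{F} \times \mathbb{Z}_2$ of the same degree and $4n$ vertices, and Zhao's inequality $i(\Gamma)^2 \le i(\Gamma')$ reduces to the bipartite case (a constant loss inside $(1+o(1))$ is absorbed). Henceforth assume $\Gamma$ is bipartite with parts $A, B$ of size $n$; identify $A$ with an index-$2$ subgroup $H \le \mathcal{F}$ and $B$ with its nontrivial coset, so that neighborhoods on each side correspond to sumsets with $D$. Writing $I = (I\cap A) \cup (I \cap B)$ and summing over the $B$-side yields $i(\Gamma) = \sum_{S \subseteq B} 2^{n - |S+D|}$.

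\textbf{Splitting the sum.} The ``trivial'' independent sets---those contained in $A$ or in $B$---number exactly $2^{n+1}-1$. It therefore suffices to show that the remaining ``mixed'' $I$, with both $I\cap A$ and $I\cap B$ non-empty, contribute $o(2^n)$. Classify these by $t = \min(|I\cap A|, |I \cap B|)$; by symmetry assume $t = |I\cap B|$ and $1 \le t \le n/2$. For small $t \le \psi$ (a threshold to be chosen), a direct estimate $\binom{n}{t} 2^{n - |S+D|}$ handles ``generic'' $S$ with $|S+D| \gtrsim d|S|$ (contribution $o(2^n)$ for $d = \omega(\log n)$), while additively ``structured'' $S$ with smaller $|S+D|$ are few, counted via PRP. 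This leaves the balanced range $\psi < t \le n/2$.

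\textbf{Container method.} For balanced $I$, associate a Sapozhenko-style container: a pair $(T, F)$ with $T$ a small ``approximation'' of $S = I \cap B$ (roughly, a minimum set whose closed double neighborhood covers $S$), and $F \supseteq S$. Each $I$ is pinned down by its container up to $2^{n - |N(T)| + |F \setminus T|}$ choices (free subsets of $A \setminus N(T)$ and of $F \setminus T$). Bounding $\sum 2^{n - |N(T)| + |F \setminus T|}$ over all containers to $o(2^n)$ requires that (a) the number of $T$ with given $|T|, |N(T)|$ is small, and (b) the slack $|F \setminus T|$ is small. Both are delivered by PRP: iterated sumsets $|T+kD|$ grow at a rate controlled by $|T+D|/|T|$, so ``low-expansion'' approximations $T$ are forced into a rigidly structured ambient set, of which there are few.

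\textbf{Main obstacle.} The crux is the sumset estimate: for $S \subseteq B$ with $|S| \le n/2$, either $|S + D|/|S|$ is large enough to overwhelm the $\binom{n}{|S|}$ union bound, or else $S$ lies in a highly additively structured set (a union of few cosets, a translate of a generalized arithmetic progression), which must be counted carefully using connectedness of $\Gamma$ (so $D$ generates $\mathcal{F}$ and rigid structures are scarce). Tuning this trade-off through several nested applications of PRP with carefully chosen parameters is what forces the degree hypothesis $d = \Omega(\log n \cdot (\log \log n)^{11})$.
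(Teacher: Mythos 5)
Your high-level plan matches the paper's: reduce to the bipartite case via Zhao's inequality, run a Sapozhenko-style container argument, and replace the missing expansion hypothesis with Pl\"unnecke--Ruzsa--Petridis. But the proposal stalls exactly where the paper's real work begins, and I don't think the gap is a matter of ``carefully chosen parameters.'' The Sapozhenko/Kahn--Park container machinery that bounds the number of $\psi$-approximations $(T,F)$ requires, for every small closed $2$-linked $A$, an isoperimetric guarantee of the form $t \geq \Omega(g\log^2 d/d^2)$ where $t = |N(A)| - |[A]|$. This simply fails for Cayley graphs, so one cannot directly bound item~(a) of your plan (the number of candidate $T$'s) with the standard argument, and asserting that ``both are delivered by PRP'' is not a proof. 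Your dichotomy ``either $|S+D|/|S|$ is large, or $S$ is highly structured and rare'' is the right intuition but is never turned into an enumeration bound, and naively counting structured $S$ via generalized arithmetic progressions would not give the needed $2^{g-\Omega(t)}$ bound on $|\mathcal{G}(a,g)|$.

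The paper's resolution is a genuinely new intermediate step that your plan is missing: before producing the usual $\varphi$- and $\psi$-approximations, one first constructs a small family $\mathcal{C}_1$ of ``boundary containers'' --- sets of size $O(t\,d_2/\log^3 d)$, at most $2^{O(t/\log d)}$ of them, one of which contains the outer boundary $G' = \{v\in N(A) : N(v)\cap [A]^c \neq \emptyset\}$ (Lemma~\ref{lem:bdry}). This is where PRP actually enters, through an iterated version (Lemma~\ref{lem:PRUSE2}) that extracts an almost-spanning subset $M'\subseteq M$ with $|M'+iD|\le |M|+O(t\cdot\mathrm{poly}\log d)$ for several $i$, which in turn feeds covering arguments (Lov\'asz--Stein) to produce a short description of $G'$. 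Only then can the $\varphi$- and $\psi$-approximation steps proceed, now restricted to sets whose boundary lies in a fixed small $C\in\mathcal{C}_1$, and the resulting container counts multiply out to $2^{O(t/\log d)}$. A Cauchy--Davenport-type bound (Olson's theorem, Corollary~\ref{thm:CDGEN}) is also needed to guarantee $t=\Omega(d)$ for small $2$-linked sets, which makes the exponential gain $2^{-\Omega(t)}$ large enough to beat the trivial $n$ choices of the starting vertex. There is also a preprocessing step (Proposition~\ref{prop:thinning}) that passes to a subset $D'\subseteq D$ with $|2D'|\ge |D'|\log^3|D'|$, which your plan does not mention and which is needed for the covering bounds to be efficient. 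Without the boundary-container idea, the container enumeration in your step (a) has no route to a sub-exponential count, so the proof as proposed does not go through.
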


This is asymptotically tight whenever $\Gamma$ is bipartite. In this case, the theorem says that most independent sets come from subsets of either part.

Apart from exhibiting this property in Cayley graphs, extending the aforementioned techniques to graphs where the guarantees in typical uses of container method are unavailable seems to be of independent interest.

 In Theorem~\ref{thm:main} make no attempt to optimize the lower bound on $d$ for the conclusion of the Theorem to hold. However, we are unable to reduce this to $\Omega(\log n)$, which we believe is the truth. In fact, we conjecture:

\begin{conjecture*}
Fix $\epsilon>0$ and let $\Gamma$ be a connected Abelian Cayley graph on $2n$ vertices and degree $(2+ \epsilon)\log n$. Then,
\[
i(\Gamma) \leq 2^{n+1} \cdot (1+o(1)).
\]
\end{conjecture*}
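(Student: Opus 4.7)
The plan is to follow the framework used to establish Theorem~\ref{thm:main}---Sapozhenko's graph container method combined with the Pl\"unnecke--Rusza--Petridis (PRP) inequality---and to sharpen each quantitative input so as to reduce the degree hypothesis from $\Omega(\log n\cdot(\log\log n)^{11})$ down to $(2+\epsilon)\log n$. At this threshold the trivial lower bound $2^{n+1}-1$ from the bipartite example already matches the target, so the proof has essentially no slack. Assume first that $\Gamma$ is bipartite with parts $A,B$ of size $n$ (the non-bipartite case should follow by analogous or easier arguments, since $\alpha(\Gamma)<n$ forces a substantially smaller count). Writing each independent set as $I=I_A\sqcup I_B$ with $I_A\subseteq A$ and $I_B\subseteq B$, the contribution from sets with $I_A=\emptyset$ or $I_B=\emptyset$ is exactly $2^{n+1}-1$, so it suffices to show that the ``mixed'' sets with $|I_A|,|I_B|\ge 1$ contribute $o(2^n)$.

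\textbf{Step 1 (Sharpened container catalogue).} Classify each mixed $I$ by a compact ``profile'' $(F,T)$, where $F$ is a $2$-linked subset of one side approximating the corresponding part of $I$ under a suitable closure operator, and $T$ controls the admissible completions on the other side, exactly as in Sapozhenko's method. The proof of Theorem~\ref{thm:main} uses roughly $\exp\!\bigl((n/d)\cdot\mathrm{polylog}(n)\bigr)$ profiles; the task is to bring the polylog factor down to a constant. Critically, the profile should encode the additive structure of $D$ itself, so that the enumeration does not pay repeatedly for translated copies of the same local configuration within $\mathcal{F}$.

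\textbf{Step 2 (Sharper PRP and stability).} In a Cayley graph one has $N(T)=T+D$, so set expansion is directly controlled by $|T+D|$. The key new input would be a stability refinement of PRP: if $|T+D|$ is close to the trivial lower bound $|T|+|D|-1$, then $T$ lies, up to a small defect, inside a coset of a small subgroup on which $D$ concentrates. Combined with the connectivity hypothesis---which forces $D$ to generate $\mathcal{F}$---this should rule out small defect sets with many valid extensions, so that the gap $\log(2^{n+1}/i(\Gamma))$ can absorb the cost of specifying a profile.

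\textbf{Main obstacle.} The bottleneck is the accumulation of sub-logarithmic losses at each step of the container argument---producing $F$, controlling its closure, bounding the number of completions consistent with a given $T$, and summing over profile sizes---each of which contributes a $\log\log n$ factor in the proof of Theorem~\ref{thm:main} and together accounts for the exponent $11$. Removing all of them simultaneously and replacing them by constants requires extracting, from the additive structure, a refined count of the $2$-linked family whose cost depends on the additive energy or Fourier concentration of $D$ rather than on purely graph-theoretic expansion. Producing such a count uniformly over all qualifying Cayley graphs is in my view the real obstacle, and the reason the conjecture is expected to need a genuinely new idea beyond the container--PRP machinery as currently developed.
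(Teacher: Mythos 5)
The statement you were asked to prove is labelled a \emph{conjecture} in the paper, not a theorem: the authors explicitly do not prove it, writing that they ``are unable to reduce this to $\Omega(\log n)$, which we believe is the truth.'' There is therefore no proof in the paper to compare against, and your proposal---which openly stops at a sketch and declares the core step out of reach---is an honest assessment rather than a proof with a hidden gap. A few remarks on how your diagnosis lines up with the authors' own. Your identification of the difficulty as the accumulation of polylogarithmic losses through the container argument is broadly consistent with the paper, but the authors are more specific: they single out Lemma~\ref{lem:bdry} (the boundary lemma) as the main bottleneck, viewing the other $\log\log n$ factors (from the Pl\"unnecke--Rusza--Petridis iteration in Lemma~\ref{lem:PRUSE2} and the generator-thinning in Proposition~\ref{prop:thinning}) as more likely removable. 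Your observation that the threshold $(2+\epsilon)\log n$ has essentially no slack matches the paper's Appendix~\ref{sec:appendix}, which exhibits a Cayley graph on $\mathbb{Z}_{2n}$ of degree $(2-\epsilon)\log n$ with $\omega(2^n)$ independent sets, showing that the conjectured degree requirement cannot be relaxed. Finally, your proposed new ingredients---encoding the additive structure of $D$ in the container profiles and a stability refinement of PRP near the Cauchy--Davenport extremum---are plausible directions but are not developed in the paper and, as you yourself conclude, would constitute a genuinely new idea beyond the machinery the authors deploy for Theorem~\ref{thm:main}.
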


This conjecture, if true, would be optimal. The natural guess for the tight case is the example described in Appendix~\ref{sec:appendix}. We use the looser bound of $\tilde{\Omega}(\log n)$ in a couple of places in the proof, but we believe that the main bottleneck is in  Lemma~\ref{lem:bdry}. 

Let us set up some basic notation for the proof. In what follows, the notation is focused on bipartite graphs. The reason is that in the proof of Theorem~\ref{thm:main}, that the main part is the case when $\Gamma$ is bipartite. The non-bipartite case is handled using a theorem of Zhao~\cite{ZHAO10}.

Let us now restrict ourselves to the case when $\Gamma$ is bipartite, and use $(X,Y)$ to denote the bipartition, where $|X| = |Y| = n$. Since the graph is connected and $|D|$-regular, for any $S \subseteq X$, we have $|N(S)| \geq |S|$ with equality holding if and only if $S = X$. One can verify that the graph is connected if and only if the set $D$ is a \emph{generating set} of the group, i.e., every element in $\mathcal{F}$ can be written as a sum of elements from $D$. Throughout the paper, we use sumset notation: For  $A,B \subseteq \mathcal{F}$, we use $A+B$ to denote the set $\{a+b~|~ a \in A,~b \in B\}$, and $2A = A+A$. Thus the set of neighbors  of a set $A$, $N_{\Gamma}(A)$ is just the set $A+D$.

Let $\Gamma^2$ denote the square graph of $\Gamma$, i.e., $V(\Gamma^2) = V(\Gamma)$ and $u \sim_{\Gamma^2}v$ if and only if $N_{\Gamma}(u) \cap N_{\Gamma}(v) \neq \emptyset$. For a subset $A \subset X$, we use $G$ to denote $N(A)$. We say that $A$ is $2$-linked if $A$ is connected in $\Gamma^2$. Let us define $[A] := \{u \in X~|~N(u) \subseteq G\}$ to be the \emph{closure} of $A$. If $|[A]| \leq n/2$, we call $A$ \emph{small}. Let $\mathcal{G}(a,g)$ denote the number of small $2$-linked sets $A$ such that $|[A]| = a$, and $|G| = g$. Let us define $t = g - a$. The main lemma in the proof is the following:

\begin{lemma}
\label{lem:main}
If $\Gamma$ is an bipartite connected Cayley graph with bipartition $(X,Y)$ with $|X| = |Y| = n$, and generator set $D$, such that  
\begin{enumerate}
\item $\frac{|D|}{\log^{8}|D|} = \Omega(\log n)$,
\item $|D| \leq n^{1/3}$, and 
\item $|2D| \geq |D| \log^3 |D|$
\end{enumerate}
Then we have for every $a,g$
\[
|\mathcal{G}(a,g)| \leq 2^{g - \Omega \left({t}\right)}.
\]
\end{lemma}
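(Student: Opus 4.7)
The plan is to apply Sapozhenko's two-stage container argument, with the additive structure of the Cayley graph (via the Pl\"unnecke-Ruzsa-Petridis inequality) playing the role that isoperimetric expansion plays in the classical hypercube proof.

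\emph{Stage 1 (building an approximation).} For each small $2$-linked $A$ with $G = N(A)$, I would construct a pair $(F, S)$ with $F \subseteq [A]$ and $G \subseteq S \subseteq Y$, chosen so that $|F|$ and $|S \setminus N(F)|$ are both small relative to $t$, and so that the pair $(F,S)$ approximately determines $(A,G)$. The set $F$ is built greedily: repeatedly add a vertex $v \in [A]$ whose neighborhood $v+D$ intersects $Y \setminus N(F)$ in a set of size at least $|D|/\log|D|$, so that each step shrinks the ``uncovered'' part of $[A]$ by a factor $(1-\tfrac{1}{\log|D|})$. Then take $S = N(F)$ together with the small set of ``boundary'' vertices of $G$, namely those with few neighbors in $[A]$.

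\emph{Stage 2 (counting approximations).} Since $A$ is $2$-linked in $\Gamma$ and $F \subseteq [A]$, the set $F$ lies in a connected subgraph of $\Gamma^2$ of size $O(|F|)$. Standard tree-counting bounds the number of such $F$'s by $n \cdot (e\,|D-D|)^{|F|}$; by Pl\"unnecke-Ruzsa-Petridis, $|D - D| \leq |2D|^2/|D|$, which is polynomial in $|D|$. Choosing $|F|$ small enough (roughly $t/\log^2|D|$), and using the hypothesis $|D| = \tilde{\Omega}(\log n)$ to absorb the $\log n$ factor from the choice of a root, this total is $2^{o(t)}$. The boundary piece of $S$ contributes another $\binom{n}{O(t/|D|)} = 2^{o(t)}$.

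\emph{Stage 3 (refinement).} Given $(F,S)$, one recovers $G$ by choosing which $|S| - g$ vertices of $S$ to remove, contributing at most $2^{|S|-g} = 2^{o(t)}$. Once $G$ is known, $[A]$ is determined, and since $A \subseteq [A]$, there are at most $2^{a} = 2^{g-t}$ choices for $A$. Combining,
\[
|\mathcal{G}(a,g)| \leq 2^{o(t)} \cdot 2^{g - t} = 2^{g - \Omega(t)}.
\]

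\emph{Main obstacle.} The hardest step is verifying termination of the greedy procedure in Stage 1: one must show that while the uncovered part of $[A]$ is nontrivial, there always exists a vertex $v \in [A]$ with at least $|D|/\log|D|$ neighbors in $Y \setminus N(F)$. This is the content of Lemma~\ref{lem:bdry}, and its proof is precisely where the hypothesis $|2D| \geq |D|\log^3|D|$ enters: when $D$ has small doubling, it lies close to a coset, and $[A]$ can then be arranged so that its $2$-boundary collapses, destroying the greedy bound. The lower bound on $|2D|$ rules this out by forcing additive spread. Pl\"unnecke-Ruzsa-Petridis is used throughout to bound iterated sumsets $|kD|$, quantifying the ``reach'' of the greedy construction in the additive setting.
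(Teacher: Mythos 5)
Your high-level picture---that this should be a Sapozhenko-style container argument powered by Pl\"unnecke--Ruzsa--Petridis in place of hypercube isoperimetry---is correct, and you identify the right hypotheses as load-bearing. But the mechanism you sketch is essentially the unmodified hypercube argument, and two of its steps break down in the Cayley setting in ways that are precisely the ones the paper has to work around.

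First, the greedy procedure in Stage 1 does not terminate with a small residual. Your greedy stops when every $v \in [A]$ has fewer than $d/\log d$ neighbours in $G \setminus N(F)$; but since each $v \in [A]$ has all its edges into $G$, this only bounds $\nabla([A], G\setminus N(F)) < a\, d/\log d$, and combined with $\nabla([A]^c, G) = td$ it gives $|G \setminus N(F)| \le a/\log d + t$, which is typically much larger than $t$ and certainly not $o(t)$. (Relatedly, the greedy criterion is additive---each step covers at least $d/\log d$ new vertices---not multiplicative, so the claimed geometric shrinkage ``by a factor $(1 - 1/\log|D|)$'' does not follow.) The paper copes with exactly this by running a separate preliminary stage (Lemma~\ref{lem:bdry}): it first builds a container $\mathcal{C}_1$ of size $2^{O(t/\log d)}$ for the \emph{boundary} $G' = \{v \in G : N(v) \cap [A]^c \ne \emptyset\}$, using Lemma~\ref{lem:PRUSE2} (an iterated PRP statement) to control the higher neighbourhoods $N^i(\underline{A})$, and only then runs a $\varphi$-approximation argument relative to a fixed $C \in \mathcal{C}_1$.

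Second, the tree-counting in Stage 2 is invalid as written. The greedy picks $v$ from anywhere in $[A]$ with high uncovered degree, so $F$ has no reason to be connected in $\Gamma^2$ or to sit inside a connected subgraph of $\Gamma^2$ of size $O(|F|)$; $[A]$ itself can have diameter far exceeding $|F|$. Tree-counting bounds (\`a la Proposition~\ref{prop:trees}) enumerate \emph{connected} sets, and the paper uses them only where it has set up actual $2$-linked components (Claim~\ref{claim:underline}, for the pieces of $[A] \setminus \underline{A}$). The remaining witnesses in the paper are enumerated by a different device: in Lemma~\ref{lem:varphi} the $\varphi$-approximation is produced from a $p$-random subset $Q_0$ of the already-known container $C$ (together with a contraction that folds in Corollary~\ref{corr:2nbhd} to guarantee expansion), so all the objects being enumerated are subsets of a small set rather than subsets of the unknown $[A]$. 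The greedy is used only in the last, easy stage (Lemma~\ref{lem:psi}, producing the $\psi$-approximation), where the candidate pool is already constrained by $F$. Your Stage 3 also has a sign problem: with $F \subseteq [A]$, one has $N(F) \subseteq G$, so $S = N(F) \cup (\text{boundary})$ undershoots $G$ rather than overshooting it, and one would need to \emph{add} vertices to recover $G$, not remove $|S|-g$ of them.

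In short, the hard part of the lemma is not ``check that a hypercube-style greedy terminates''---it is to decompose the construction into a boundary-control step (Lemma~\ref{lem:bdry}), a random $\varphi$-cover relative to that boundary (Lemma~\ref{lem:varphi}, powered by Corollary~\ref{corr:2nbhd}), and only then the familiar greedy $\psi$-refinement (Lemma~\ref{lem:psi}). The hypothesis $|2D| \ge |D|\log^3|D|$ and PRP are indeed what make this work, but they enter through Lemma~\ref{lem:PRUSE2} and Corollary~\ref{corr:2nbhd} rather than through a direct tree count over greedy witnesses.
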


\subsection*{Remark}

For comparison, the graph container lemma of Sapozhenko, improved by Kahn and Park~\cite{KP19} says the following:

\begin{lemma}
If $\Gamma$ is a $d$-regular bipartite graph with $d \gg \log n$ and bipartition $(X,Y)$ such that
\begin{enumerate}
\item Every two vertices have at most $O(1)$ common neighbors 
\item For every small set $A \subset X$, we have that $t \geq \Omega \left(g\frac{\log^2d}{d^2}\right)$,
\end{enumerate}
Then we have for every $a,g$
\[
|\mathcal{G}(a,g)| \leq 2^{g - \Omega \left({t}\right)}.
\]
\end{lemma}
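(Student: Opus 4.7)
The plan is to follow Sapozhenko's two-phase graph-container framework, with the usual bounded-codegree hypothesis replaced by an additive-combinatorial substitute that exploits $|2D|\ge |D|\log^{3}|D|$, and with the standard isoperimetric lower bound on $t$ supplied by the boundary lemma that the paper establishes separately.

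Phase one: build the fingerprint. Given a small $2$-linked $A\subseteq X$ with $|[A]|=a$ and $|N(A)|=g$, I construct a pair $(S,F)$, with $S\subseteq[A]$ and $F\subseteq G$, that determines a container for $A$. Sample a subset of $X$ by keeping each vertex independently with probability $p=\Theta(\log|D|/|D|)$ and intersect with $[A]$; a standard first-moment / greedy "covering code" argument, using only the $|D|$-regularity of $\Gamma$ and the fact that every vertex of $G$ is covered by some vertex of $[A]$, produces an $S$ of size $O(a\log|D|/|D|)$ with $|G\setminus N(S)|=O(t)$. Set $F:=G\setminus N(S)$, so $|F|=O(t)$ and $A\subseteq\{u\in X:N(u)\subseteq N(S)\cup F\}$.

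Phase two: container and reconstruction. Let
\[
C(S,F):=\{u\in X:u+D\subseteq N(S)\cup F\},
\]
which contains $A$. The key claim is $|C(S,F)|\le a+O(t)$, so that $A$ is determined by $(S,F)$ up to $2^{O(t)}$ choices. In the standard hypercube-like setting this follows from bounded codegree; here I want to replace the codegree control with an additive-structure control. Any $u\in C(S,F)\setminus[A]$ has at least one neighbor in $F$ (else $u$ would already lie in $[A]$), and $u+D\subseteq N(S)\cup F$. Counting incidences between $C(S,F)\setminus[A]$ and $F$ across the $|D|$ shifts, and using that $|2D|\ge|D|\log^{3}|D|$ forces the additive energy of $D$ to be small so that typical elements of $F-D$ have few representations, I will show via the Pl\"unnecke--Ruzsa inequality (in Petridis's form) applied to $D$ and to the translates $F-d$ that $|C(S,F)\setminus[A]|=O(t)$.

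Phase three: counting. Multiplying over fingerprints and reconstructions,
\[
|\mathcal{G}(a,g)|\;\le\;\binom{n}{|S|}\cdot\binom{g}{|F|}\cdot 2^{|C(S,F)|-a}\;\le\;n^{O(a\log|D|/|D|)}\cdot 2^{O(t)}.
\]
The boundary lemma of the paper gives $t$ large enough (relative to $a\log n\log|D|/|D|$, courtesy of hypothesis (1)) that $n^{O(a\log|D|/|D|)}=2^{O(t)}$, and combining with $g=a+t$ yields $|\mathcal{G}(a,g)|\le 2^{g-\Omega(t)}$. Hypothesis (2), $|D|\le n^{1/3}$, is used only to keep the seed much smaller than $n$ and to prevent degenerate regimes in the polylogarithmic bookkeeping.

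The main obstacle is phase two: establishing $|C(S,F)|\le a+O(t)$ without a bounded-codegree assumption. This is precisely where hypothesis (3) must earn its keep, because large $|2D|$ says that $D$ is genuinely spread and hence cannot concentrate many $D$-shifts of a small set $F$ onto common targets, which is exactly what Pl\"unnecke--Ruzsa quantifies. Tuning the polylogarithmic factors in hypotheses (1) and (3) so that the Pl\"unnecke loss in phase two, the seed cost $n^{O(a\log|D|/|D|)}$ in phase three, and the boundary-lemma lower bound on $t$ all line up is the calculational heart of the argument.
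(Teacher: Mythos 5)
Your proposal does not prove the lemma as stated: it addresses a different statement with different hypotheses. The lemma in question is the Sapozhenko/Kahn--Park container lemma for an \emph{arbitrary} $d$-regular bipartite graph, whose only hypotheses are (1) every two vertices have $O(1)$ common neighbors and (2) every small $A$ satisfies $t=\Omega(g\log^2 d/d^2)$; there is no group, no generating set $D$, and hence no sumset $2D$. (In the paper this lemma is not proved at all --- it is quoted from Kahn and Park for comparison, precisely to emphasize that its expansion hypothesis is unavailable for Cayley graphs.) Your argument, however, leans on ``$|2D|\ge |D|\log^{3}|D|$'', ``$|D|\le n^{1/3}$'', a degree condition of the form $|D|/\log^{8}|D|=\Omega(\log n)$, and on ``the boundary lemma of the paper'' (Lemma~\ref{lem:bdry}), all of which are the hypotheses and auxiliary results of the paper's Lemma~\ref{lem:main} for Abelian Cayley graphs, not of the statement at hand. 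None of these is available for a general $d$-regular bipartite graph, and conversely you explicitly discard the bounded-codegree hypothesis, which is exactly the ingredient Sapozhenko's argument uses at the reconstruction step you call phase two.

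Even read on its own terms, the sketch has two genuine gaps. First, the central claim of phase two, $|C(S,F)|\le a+O(t)$, is only asserted (``I will show via Pl\"{u}nnecke--Ruzsa\dots''); no incidence or energy argument is given, and this is the heart of any container proof --- in the setting of the stated lemma it must come from the codegree bound, not from additive structure. Second, the phase-three count $\binom{n}{|S|}$ with $|S|=O(a\log d/d)$ costs $2^{O(a\log n\log d/d)}$, and hypothesis (2) only guarantees $t=\Omega(g\log^{2}d/d^{2})$, which falls short of $a\log n\log d/d$ by a factor of order $d\log n/\log d$; the boundary lemma you invoke gives a container for $G'$, not a lower bound on $t$, so nothing in the hypotheses lets you absorb $n^{O(a\log d/d)}$ into $2^{O(t)}$. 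The actual Sapozhenko/Kahn--Park proof avoids this by anchoring the fingerprints inside explicitly constructed sets of size polynomial in $d$ times $t$ (using $2$-linkedness and the $\varphi$- and $\psi$-approximation scheme, as in Sections~\ref{subsec:varphi} and~\ref{subsec:psi} of this paper), so that the enumeration cost is $2^{o(t)}$ rather than $n^{\Theta(\cdot)}$. As written, the proposal neither matches the statement's hypotheses nor closes its own key steps.
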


Condition $2$ imposes certain expansion conditions on the graph, which is not true in general for Cayley graphs. We overcome this using tools from additive combinatorics.

\subsection{Organization}
In section~\ref{sec:preliminaries}, we state some results from additive combinatorics that are useful. The end of this section contains the proof of Theorem~\ref{thm:main} using Lemma~\ref{lem:main}. Section~\ref{sec:mainlem} is dedicated to the proof of Lemma~\ref{lem:main}. The proof of Lemma~\ref{lem:main}, using Lemma~\ref{lem:bdry}, Lemma~\ref{lem:varphi}, and Lemma~\ref{lem:psi} is given in Subsection~\ref{subsec:recon}. Some preliminary lemmas are proved in Subsection~\ref{subsec:preliminaries}, after which, Subsections~\ref{subsec:bdry},~\ref{subsec:varphi}, and~\ref{subsec:psi} are dedicated to the proofs of Lemmas~\ref{lem:bdry},~\ref{lem:varphi}, and~\ref{lem:psi} respectively.

\section{Preliminaries} 
\label{sec:preliminaries}

Here, we will state some results from additive combinatorics that will be useful to us. The first is the Pl\"{u}nnecke-Rusza-Petridis inequality ~\cite{PLUNNECKE70},~\cite{RUZSA89},~\cite{RUZSA90},~\cite{PETRIDIS14}:

\begin{theorem}[Pl\"{u}nnecke-Ruzsa-Petridis Inequality]
\label{thm:PR}
Let $M,D \subset \mathcal{F}$ such that $|M+D| = \alpha |M|$. Then for any nonnegative integer $j$, there is a subset $M' \subseteq M$ such that $|M'+jD| \leq \alpha^j |M'|$.
\end{theorem}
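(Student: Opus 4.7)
The plan is to follow the elegant argument of Petridis, which proceeds by choosing the right subset of $M$ and then proving a more general statement by induction. First I would pick a nonempty $M' \subseteq M$ that minimizes the ratio $|M'+D|/|M'|$, and denote this minimum by $K$. Since $M$ itself witnesses $|M+D|/|M| = \alpha$, we have $K \leq \alpha$, so it will suffice to show $|M'+jD| \leq K^j |M'|$ for every $j \geq 0$.

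The heart of the argument is the following auxiliary claim, which I would prove by induction on $|C|$: for every finite $C \subseteq \mathcal{F}$,
\[
|M'+D+C| \leq K\,|M'+C|.
\]
The base case $|C|=1$ is immediate from the definition of $K$. For the inductive step, write $C = C' \cup \{c\}$ with $c \notin C'$, and consider the set
\[
M'' := \{\,x \in M' \;:\; x + D + c \subseteq M' + D + C'\,\}.
\]
The key observations are: (i) $M'' + D + c$ lies in the intersection $(M'+D+C') \cap (M'+D+c)$, so by inclusion–exclusion and $|M''+D+c| = |M''+D|$,
\[
|M'+D+C| \;\leq\; |M'+D+C'| + |M'+D| - |M''+D|;
\]
(ii) by minimality of $K$ applied to $M'' \subseteq M' \subseteq M$, we have $|M''+D| \geq K|M''|$; and (iii) every $x \in M'$ with $x+c \in M'+C'$ automatically satisfies $x+D+c \subseteq M'+D+C'$, hence lies in $M''$. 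The last observation gives $|M'+C| \geq |M'+C'| + |M'| - |M''|$. Plugging in the inductive hypothesis $|M'+D+C'| \leq K|M'+C'|$ and combining the three inequalities completes the induction.

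Given the claim, the theorem falls out by iteration: take $C = D$ to get $|M'+2D| \leq K|M'+D| = K^2|M'|$, and proceed inductively so that $|M'+jD| \leq K^j|M'| \leq \alpha^j|M'|$, which is what was to be shown.

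The step I expect to require the most care is the inductive step of the auxiliary claim, specifically the bookkeeping that shows $M''$ simultaneously controls both the intersection $(M'+D+C')\cap(M'+D+c)$ from above (via (i)) and the intersection $(M'+C')\cap(M'+c)$ from below (via (iii)); these two opposite-direction bounds have to line up in order for the minimality hypothesis $|M''+D|\geq K|M''|$ to be applied productively. Everything else is either a definitional unpacking or a one-line inclusion–exclusion computation.
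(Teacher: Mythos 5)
The paper does not prove this theorem; it cites it from the literature (Pl\"unnecke, Ruzsa, and Petridis) and uses it as a black box, together with the more refined quantitative version stated as Theorem~\ref{thm:PRGEN}. Your proposal is a correct and essentially complete reproduction of Petridis's argument, which is the modern standard proof: you pick $M' \subseteq M$ minimizing the doubling ratio $K = |M'+D|/|M'|$, establish by induction on $|C|$ that $|M'+D+C| \leq K|M'+C|$ via the auxiliary set $M'' = \{x \in M' : x+D+c \subseteq M'+D+C'\}$ and two inclusion--exclusion estimates that squeeze the same $M''$ from both directions, and then iterate with $C = D, 2D, \dots$ to get $|M'+jD| \leq K^j|M'| \leq \alpha^j|M'|$. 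The two places that need a sentence of care are both handled implicitly but correctly: the minimality of $K$ applies to $M''$ because $M'' \subseteq M$ (and the degenerate case $M'' = \emptyset$ is harmless since both sides vanish), and the identification $(M'+C') \cap (M'+c) = \{m'+c : m' \in M',\ m'+c \in M'+C'\}$ is what makes observation (iii) yield $|(M'+C')\cap(M'+c)| \leq |M''|$. Since the paper offers no proof to compare against, there is nothing further to reconcile; your write-up could be inserted as a self-contained proof of Theorem~\ref{thm:PR}.
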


We will also need a theorem by Olson~\cite{OLSON84} which is a Cauchy-Davenport type theorem for general Abelian groups. 

\begin{theorem}[\cite{OLSON84}]
\label{thm:Olson}
Let $M,N \subseteq \mathcal{F}$ such that $0\in N$. Then either $M + 2N = M+N$ or $|M+N| \geq |M| + |N|/2$.
\end{theorem}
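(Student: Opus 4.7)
The plan is to derive this dichotomy from Kneser's theorem on abelian group sumsets, which asserts that for finite non-empty $A, B \subseteq \mathcal{F}$ and $H := \{g \in \mathcal{F} : g + A + B = A + B\}$ the stabilizer of $A+B$, one has $|A+B| \geq |A+H| + |B+H| - |H|$. Taking $A = M$ and $B = N$, I will reformulate the first alternative $M + 2N = M + N$ as a stabilizer condition on $N$, and then show that if it fails, the cosets of $\mathrm{Stab}(M+N)$ that meet $N$ give an immediate lower bound via Kneser.

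First I would show that $M + 2N = M + N$ if and only if $N \subseteq H := \mathrm{Stab}(M+N)$. Since $0 \in N$ we always have $M+N \subseteq M+2N$, so the equality is equivalent to the reverse inclusion, which says that $n + (M+N) \subseteq M+N$ for every $n \in N$; by finiteness this forces $n + (M+N) = M+N$, i.e., $n \in H$. So $M+2N = M+N$ iff $N \subseteq H$.

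Now assume the first alternative fails, so $N \not\subseteq H$. Since $0 \in N$ and $0 \in H$, the coset $H$ lies in $N+H$, and there is at least one additional coset $n+H$ with $n \in N \setminus H$ that is disjoint from $H$. Hence $N + H$ is a union of $k \geq 2$ cosets of $H$, so $|N+H| = k|H| \geq |N|$. Kneser's theorem then yields
\[
|M+N| \;\geq\; |M+H| + |N+H| - |H| \;\geq\; |M| + (k-1)|H|.
\]
Since $k \geq 2$ we have $k \leq 2(k-1)$, so $(k-1)|H| \geq k|H|/2 \geq |N|/2$, and the desired inequality $|M+N| \geq |M| + |N|/2$ follows.

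The only conceptual step is the reformulation of the first alternative as the inclusion $N \subseteq \mathrm{Stab}(M+N)$; once this is in place, everything reduces to a one-line application of Kneser. The main obstacle, if one insisted on a self-contained proof, would be to avoid invoking Kneser, which would require redoing the coset-partition analysis (typically via an $e$-transform argument) that is essentially the content of Kneser's theorem itself.
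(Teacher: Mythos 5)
Your argument is correct, and the paper itself offers no proof to compare against — Theorem~\ref{thm:Olson} is simply cited to Olson's 1984 paper. So the relevant question is whether your Kneser-based derivation is sound, and it is. The reformulation of the first alternative as $N \subseteq H := \mathrm{Stab}(M+N)$ is exactly right: one direction is immediate since $H$ is a subgroup, and the converse uses finiteness to promote $n + (M+N) \subseteq M+N$ to equality, hence $n \in H$. When $N \not\subseteq H$, the coset count $|N+H| = k|H|$ with $k \geq 2$ follows since $H$ and some $n+H$ with $n \in N\setminus H$ are distinct cosets inside $N+H$ (using $0 \in N$). Kneser then gives $|M+N| \geq |M+H| + |N+H| - |H| \geq |M| + (k-1)|H|$, and the closing arithmetic $(k-1)|H| \geq \tfrac{k}{2}|H| = \tfrac{1}{2}|N+H| \geq \tfrac{1}{2}|N|$ holds precisely because $k \geq 2$.

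Two small remarks. First, Kneser requires both summands to be non-empty; the degenerate case $M = \emptyset$ is trivial (both alternatives hold vacuously, as $M+N = M+2N = \emptyset$), but it is worth a one-line dismissal if you want the statement airtight. Second, as you yourself note, this route is not self-contained: it imports the full strength of Kneser's theorem, whereas Olson's original argument proves the weaker dichotomy directly (so that the result is usable in treatments that develop Cauchy--Davenport-type bounds without first proving Kneser). In the context of this paper, which cites the result as a black box, your derivation is a perfectly valid substitute and arguably the cleanest way to see why the statement is true.
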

We can easily derive the following from Theorem~\ref{thm:Olson} by  applying it for $N'=N - a$, for some element $a\in N$ such that $0\in N'$.

\begin{corollary}
\label{thm:CDGEN}
Let $M,N \subseteq \mathcal{F}$. Then either $|M + 2N| = |M+N|$ or $|M+N| \geq |M| + |N|/2$.
\end{corollary}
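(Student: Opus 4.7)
The plan is to deduce Corollary \ref{thm:CDGEN} directly from Theorem \ref{thm:Olson} via a translation trick, exactly as the one-line hint in the text suggests. The only hypothesis of Olson's theorem we cannot freely assume is $0 \in N$, but since sumset cardinalities (and set equalities, up to a common shift) are translation invariant, this is purely a cosmetic issue that we can fix by replacing $N$ with an appropriate translate.

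Concretely, I would first dispose of the trivial case $N = \emptyset$, where both $M+N$ and $M+2N$ are empty and the first alternative holds tautologically. Otherwise, pick any $a \in N$ and define $N' := N - a = \{n - a : n \in N\}$. Then $0 \in N'$ and $|N'| = |N|$, so Theorem \ref{thm:Olson} applied to the pair $(M, N')$ yields one of the following two alternatives:
\[
M + 2N' \;=\; M + N', \qquad \text{or} \qquad |M + N'| \;\geq\; |M| + |N'|/2.
\]

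To finish, I would translate each alternative back to a statement about $M+N$ and $M+2N$. From the definition $N' = N - a$ one reads off $M + N' = (M + N) - a$ and $M + 2N' = (M + 2N) - 2a$, so in particular $|M + N'| = |M + N|$ and $|M + 2N'| = |M + 2N|$. The second alternative therefore becomes $|M+N| \geq |M| + |N|/2$, as desired. For the first alternative, the set equality $(M + 2N) - 2a = (M + N) - a$ rearranges to $M + 2N = (M + N) + a$, exhibiting $M + 2N$ as a translate of $M + N$ and hence giving $|M + 2N| = |M + N|$.

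There is no real obstacle in this argument; the only point worth checking carefully is that the translation moves the set equality by the correct amount ($-2a$ on one side, $-a$ on the other), but this only has the effect of realising $M+2N$ as a translate of $M+N$, which is still enough to conclude equality of cardinalities as stated in the corollary.
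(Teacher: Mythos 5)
Your proof is correct and is exactly the argument the paper has in mind: it states that the corollary follows from Theorem~\ref{thm:Olson} by applying it to $N' = N - a$ for some $a \in N$, which is precisely your translation trick. You have simply spelled out the bookkeeping (that $M+2N$ becomes a translate of $M+N$ under the first alternative), which the paper leaves implicit.
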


While the Pl\"{u}nnecke-Ruzsa-Petridis inequality as stated, gives no guarantee on the size of the set $M'$ (in the theorem statement), one may obtain such a theorem through repeated applications of a general version of Theorem~\ref{thm:PR}:

\begin{theorem}[\cite{AR09}, part $\text{II}$, Theorem $1.7.3$]
\label{thm:PRGEN}
Let $M,D,N \subset \mathcal{F}$ such that $|M| = m$, and let $1 \leq j<h$ be positive integers, with $\gamma:=h/j$. Let $|(M + jD) \setminus (N + (j-1)D)| = s$, and $\ell < m$ be a positive integer. There is a subset $M' \subseteq M$ such that $|M'| > \ell$, and 
\begin{align*}
|(M' + hD) \setminus (N+ (h-1)D)| & \leq \frac{s^{\gamma}}{\gamma}\left(\frac{1}{(m-\ell)^{\gamma - 1}} - \frac{1}{m^{\gamma - 1}}\right) +  \left(\frac{s}{m - \ell}\right)^{\gamma} \left(|M'| - \ell \right).
\end{align*}
\end{theorem}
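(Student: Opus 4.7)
The plan is to adapt Petridis's minimization approach to the Plünnecke--Ruzsa inequality to the quantitative, relative setting demanded by the theorem. The new feature compared to the basic case is the size constraint $|M'|>\ell$, which forces a telescoping accounting on top of the one-shot minimization argument.

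First, I would establish a relative version of Petridis's key lemma. Define $f(X) := |(X + jD)\setminus(N + (j-1)D)|$ for $X\subseteq M$, so that $f(M)=s$. Following Petridis, let $X^*\subseteq M$ be a minimizer of $f(X)/|X|$; I would show that for every $Y\subseteq\mathcal{F}$,
\[
|(X^* + jD + Y)\setminus(N+(j-1)D+Y)|\le \frac{f(X^*)}{|X^*|}\cdot |X^*+Y|.
\]
If this failed for some $Y$, the ``bad'' part of $Y$ would isolate a subset of $X^*$ with strictly smaller ratio, contradicting minimality. Iterating with $Y = iD$ for $i=0,1,\ldots,h-j-1$ and telescoping using $\gamma=h/j$ gives
\[
|(X^*+hD)\setminus(N+(h-1)D)| \le \bigl(f(X^*)/|X^*|\bigr)^{\gamma}\cdot|X^*|,
\]
which is the engine for the second term in the stated bound.

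Second, to incorporate the constraint $|M'|>\ell$, I would construct a decreasing chain $M = M_0\supseteq M_1\supseteq\cdots \supseteq M_r = M'$ by applying the above iteratively, each $M_{i+1}$ being the Petridis-minimizer inside $M_i$. Along the chain, the ratio $f(M_i)/|M_i|$ is non-increasing, and the incremental change in the quantity $f(M_i)^{\gamma}/|M_i|^{\gamma-1}$ as an element is removed is, by a convexity computation, bounded by roughly $(\gamma-1)\,s^{\gamma}/|M_i|^{\gamma}$. A discrete integration from $k=m$ down to $k=m-\ell$ then yields the first, integral-like term $\frac{s^{\gamma}}{\gamma}\bigl((m-\ell)^{-(\gamma-1)}-m^{-(\gamma-1)}\bigr)$, while a final application of the key lemma to $M'$ itself contributes the second term $\bigl(s/(m-\ell)\bigr)^{\gamma}(|M'|-\ell)$.

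The main obstacle I expect is the telescoping accounting in the second step. Individual removals can produce erratic changes in the relative sumset $(M_i+hD)\setminus(N+(h-1)D)$, so one must remove elements in a carefully chosen order --- for instance, greedily taking the element of largest marginal contribution at each stage --- to ensure the per-step increment really matches the continuous derivative $-(\gamma-1)s^{\gamma}/k^{\gamma}$. A secondary delicate point is the relative key lemma itself: the forbidden region $N+(j-1)D$ interacts nontrivially with translation by $Y$, and one must verify that the set-theoretic manipulations of $(X+jD)\setminus(N+(j-1)D)$ under translation behave as cleanly as in the classical $N=\emptyset$ case.
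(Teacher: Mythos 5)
The paper only cites this statement from Ruzsa's notes \cite{AR09} and does not reprove it, so there is no internal proof to compare against; judged on its own, your Petridis-based plan has two gaps that I believe are fatal rather than ``delicate points to verify.'' The telescoping in step one does not close. Your relative key lemma, applied with $Y=(h-j)D$, yields $|(X^*+hD)\setminus(N+(h-1)D)|\le \frac{f(X^*)}{|X^*|}\,|X^*+(h-j)D|$, and the factor $|X^*+(h-j)D|$ is not controlled by the hypotheses: we only know $|(X^*+jD)\setminus(N+(j-1)D)|$. Already for $j=1,h=2$ one gets $K\,|X^*+D|\le K(K|X^*|+|N|)$ with an uncontrolled $K|N|$ term. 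To close the recursion you would need the right-hand side to be $K\cdot|(X^*+(h-j)D)\setminus(N+(h-j-1)D)|$, and the Petridis induction does not produce that. The induction itself also breaks: Petridis's step lower-bounds the overlap of $X^*+B+c$ with $X^*+B+C'$ by $|Z+B|\ge K|Z|$, but in the restricted setting the overlap must survive excision by $(N+(j-1)D)+C$, and minimality only controls $|(Z+jD)\setminus(N+(j-1)D)|$, not $|(Z+jD+c)\setminus((N+(j-1)D)+C)|$, so the needed lower bound $K|Z|$ is unavailable.

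Step two is internally inconsistent. A ratio-minimizer of $f(X)/|X|$ over $X\subseteq M$ is automatically a minimizer over subsets of itself, so a chain built by ``taking the Petridis-minimizer inside $M_i$'' stalls after one step and gives no size control; conversely, if you remove one element at a time, the intermediate $M_i$ are not minimizers and the bound $|(M_i+hD)\setminus\cdots|\le (f(M_i)/|M_i|)^{\gamma}|M_i|$ does not apply to them, so the ``tracked quantity'' $f(M_i)^{\gamma}/|M_i|^{\gamma-1}$ is not actually an upper bound along the chain. The standard argument in Ruzsa's framework is genuinely different: one decomposes $M=X_1\sqcup X_2\sqcup\cdots$ by repeatedly extracting the magnification-minimizing block from the residual commutative layered graph (proved via Menger/Pl\"unnecke, not Petridis), so that the ratios $\mu_1\le\mu_2\le\cdots$ are nondecreasing, the $h$-images of the blocks can be bounded and added, and $\mu_i\le s/(m-\sum_{i'<i}|X_{i'}|)$; the set $M'$ is a union of blocks and the stated bound comes from summing $\mu_i^{\gamma}|X_i|$. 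If you want to keep the Petridis route, you must at minimum restate the key lemma so that its right-hand side is a restricted sumset of the same shape, prove that version, and replace the decreasing chain by a disjoint block decomposition.
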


In fact, eventually, we will want a set $M' \subseteq M$ such that each $M' + iD$ is small (see Lemma~\ref{lem:PRUSE2}), which may be obtained by repeated application of Theorem~\ref{thm:PRGEN}.

As mentioned before, we need the following theorem of Zhao~\cite{ZHAO10} which allows  to  prove our main result just for bipartite graphs. Here, $\Gamma \times K_2$ is a bipartite graph with vertex set $V(\Gamma) \times \{1,2\}$ and (undirected) edge set $\{\{(u,1)(v,2)\},\{(v,1),(u,2)\}~|~\{u,v\} \in E(\Gamma)\}$.

\begin{theorem}[\cite{ZHAO10}]
\label{thm:ZHAO10}
For any graph $\Gamma$, we have $i(\Gamma \times K_2) \geq i(\Gamma)^2$.
\end{theorem}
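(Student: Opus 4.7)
The plan is to construct an explicit injection from $I(\Gamma) \times I(\Gamma)$ into $I(\Gamma \times K_2)$. First I would rewrite the target in a more workable form: by the definition of $\Gamma \times K_2$, an independent set is the same data as an ordered pair $(S_1, S_2)$ of subsets of $V(\Gamma)$ such that no edge of $\Gamma$ has one endpoint in $S_1$ and the other endpoint in $S_2$. Let $\mathcal{N}$ denote the collection of such pairs; then $i(\Gamma \times K_2) = |\mathcal{N}|$, and it suffices to produce an injection $\phi \colon I(\Gamma)^2 \to \mathcal{N}$.

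Given $(A, B) \in I(\Gamma)^2$, I would study the induced subgraph $H := \Gamma[A \triangle B]$. Because $A$ and $B$ are each independent, $H$ has no edge inside $A \setminus B$ and no edge inside $B \setminus A$, so $H$ is bipartite with bipartition $(A \setminus B,\, B \setminus A)$. Let $C_1, \ldots, C_k$ be its connected components; each $C_i$ has a unique bipartition, and after fixing any total order on $V(\Gamma)$ I declare $X_i$ to be the side of $C_i$ containing its minimum vertex (and $Y_i$ the other side). Then set
\[
\phi(A, B) := (S_1, S_2), \qquad S_1 := (A \cap B) \cup \bigcup_{i:\, X_i \subseteq A} C_i, \qquad S_2 := (A \cap B) \cup \bigcup_{i:\, X_i \subseteq B} C_i;
\]
that is, $A \cap B$ is placed on both sides, and each component $C_i$ is placed wholesale on the side corresponding to which of $A, B$ contains its distinguished part $X_i$.

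To verify that $(S_1, S_2) \in \mathcal{N}$, suppose for contradiction that $uv$ is an edge of $\Gamma$ with $u \in S_1$ and $v \in S_2$. If both endpoints lie in $A \cap B$, independence of $A$ is violated. If one endpoint lies in $A \cap B$ and the other in some $C_i$, then depending on which side of $C_i$ the second endpoint occupies, both endpoints sit inside $A$ or both inside $B$, again contradicting independence. And if both endpoints lie in $\bigcup_i C_i$, they must belong to the same component of $H$ (since edges of $H$ do not cross components), but each $C_i$ has been assigned entirely to $S_1$ or entirely to $S_2$. Injectivity is then transparent: from $(S_1, S_2)$ one recovers $A \cap B = S_1 \cap S_2$, $A \cup B = S_1 \cup S_2$, the graph $H$, and each of its components; knowing whether $C_i \subseteq S_1$ or $C_i \subseteq S_2$ tells us whether $X_i$ sits in $A$ or in $B$, which reconstructs $(A, B)$ uniquely.

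I do not anticipate a serious obstacle here. The conceptual point is that the naive map $(A, B) \mapsto (A \cup B, A \cap B)$ already lands in $\mathcal{N}$ but is $2^k$-to-$1$ over its image, where $k$ is the number of components of $H$; the fixed vertex ordering is precisely what breaks this symmetry and makes the component-wise swap reversible. No structural hypothesis on $\Gamma$ (regularity, Cayley structure, bipartiteness) is used, so the argument applies to arbitrary finite graphs.
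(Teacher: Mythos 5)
Your argument is correct, and it is exactly Zhao's ``bipartite swapping trick'' from the cited reference \cite{ZHAO10}; the paper itself states this theorem as a black box without proof. The key points — identifying independent sets of $\Gamma \times K_2$ with pairs $(S_1,S_2)$ having no $\Gamma$-edge crossing between them, working with the symmetric difference $A \triangle B$ (which induces a bipartite subgraph because $A,B$ are independent), and using a fixed vertex order to canonically orient each connected component so the swap becomes reversible — are precisely the ingredients of Zhao's proof, and your verification of well-definedness and injectivity is complete.
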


We will use the following theorem, originally due to Lov\'{a}sz~\cite{LOVASZ75} and Stein~\cite{Stein74}.

\begin{theorem}[\cite{LOVASZ75}, \cite{Stein74}]
\label{thm:cover}
Let $G$ be a bipartite graph on vertex sets $A$ and $B$ where the degree of each vertex in $A$ is at least $a$ and the degree of each vertex in $B$ is at most $b$. Then there is subset $B' \subset B$ of size at most $\frac{|B|}{a}(1 + \ln b)$ such that $A \subseteq N(B')$.
\end{theorem}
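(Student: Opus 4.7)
The plan is to combine a random sampling step with a greedy completion, in the spirit of the standard probabilistic proof of the Lovász--Stein covering theorem. First I would dispose of a trivial edge case: if $\ln b \geq a$, then $\frac{|B|(1+\ln b)}{a} \geq |B|$, so taking $B' = B$ suffices (every vertex of $A$ has degree at least $a \geq 1$, hence $A \subseteq N(B)$). Therefore I may assume $p := (\ln b)/a < 1$, which makes $p$ a valid probability.

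Next, I would let $B_1 \subseteq B$ be the random subset formed by including each vertex of $B$ independently with probability $p$. By linearity of expectation, $\E|B_1| = p|B| = |B|\ln b / a$. For any fixed $u \in A$, the probability that none of $u$'s neighbors lies in $B_1$ is
\[
(1-p)^{d_G(u)} \leq (1-p)^a \leq e^{-pa} = \frac{1}{b}.
\]
Letting $U \subseteq A$ denote the set of vertices uncovered by $B_1$, we obtain $\E|U| \leq |A|/b$.

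Then I would greedily patch: for each $u \in U$, pick an arbitrary neighbor of $u$ in $B$ and collect these into a set $B_2$, so $|B_2| \leq |U|$, and set $B' := B_1 \cup B_2$; by construction $A \subseteq N(B')$. To bound $|A|/b$, I would use a simple edge double count: $a|A| \leq e(G) \leq b|B|$, hence $|A|/b \leq |B|/a$. Combining,
\[
\E|B'| \;\leq\; \E|B_1| + \E|U| \;\leq\; \frac{|B|\ln b}{a} + \frac{|A|}{b} \;\leq\; \frac{|B|(1+\ln b)}{a},
\]
so some realization of $B_1$ yields a $B'$ of the stated size. There is no substantial obstacle here; the only delicate point is the trivial case above, and the key structural input is the edge double count, which plays the role of LP duality without requiring any explicit LP argument.
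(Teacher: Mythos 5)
Your proof is correct. Note that the paper does not supply its own argument here --- Theorem~\ref{thm:cover} is invoked by citation to Lov\'asz and Stein, whose original proofs proceed by a pure greedy argument: repeatedly choose the vertex of $B$ covering the most uncovered vertices of $A$, and track the decay of the uncovered set. You instead use the equally classical random-plus-patching argument: sample $B_1$ at density $p = (\ln b)/a$, then greedily add one neighbor per uncovered vertex, and use the degree double count $a|A| \le e(G) \le b|B|$ to convert the expected shortfall $|A|/b$ into the $|B|/a$ term. The greedy proof has the advantage of being deterministic and constructive; your probabilistic version is shorter and makes the origin of the $1 + \ln b$ factor transparent (the $\ln b$ from the sample, the $+1$ from the patch). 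Both give exactly the stated bound. One small edge case worth noting explicitly, though you handle it implicitly: when $b = 1$, $p = 0$ and the entire burden falls on the patch $B_2$, which then has size $|A| \le |B|/a$, matching the target bound $|B|(1 + \ln 1)/a = |B|/a$ with equality; your estimate $(1-p)^a \le 1/b$ degenerates to $1 \le 1$ there, so the argument still closes.
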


We will also use the following (see for e.g.~\cite{KNUTH98}, p.396, Ex.11).

\begin{proposition}
\label{prop:trees}
The number of rooted trees with maximum degree $d$ and $n$ internal vertices is at most 
\[
\frac{\binom{d n}{n}}{(d-1)n + 1} \leq (ed)^n.
\]
\end{proposition}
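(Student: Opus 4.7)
The plan is to map rooted plane trees of maximum degree $d$ with $n$ internal vertices injectively into full $d$-ary plane trees with $n$ internal vertices, and then enumerate the latter via the cycle lemma. Concretely, any internal vertex with $k < d$ children can be padded by appending $d-k$ sentinel ``null'' children on the right; the resulting map into the class where every internal vertex has exactly $d$ ordered children is injective, since one recovers the original tree by deleting the null children in each adjacency list. Hence it suffices to bound the size of the full $d$-ary plane tree class.

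For a full $d$-ary plane tree with $n$ internal vertices (which must then have $(d-1)n + 1$ leaves, and $dn + 1$ vertices in total), I would perform a DFS preorder traversal and write $d-1$ at each internal vertex and $-1$ at each leaf. This produces a bijection between such trees and words $w \in \{d-1,\, -1\}^{dn+1}$ that contain exactly $n$ copies of $d-1$, sum to $-1$, and have every proper prefix sum nonnegative. To count such words, apply the Dvoretzky--Motzkin cycle lemma: among the $dn+1$ cyclic rotations of any word with this composition (and thus total sum $-1$), exactly one rotation satisfies the prefix-sum condition. Since there are $\binom{dn+1}{n}$ unrestricted words of this composition, the number of valid words, and hence of full $d$-ary plane trees, equals
\[
\frac{1}{dn+1}\binom{dn+1}{n} \;=\; \frac{\binom{dn}{n}}{(d-1)n+1}.
\]

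The final inequality $\frac{\binom{dn}{n}}{(d-1)n+1} \leq (ed)^n$ follows from the standard estimate $\binom{dn}{n} \leq \bigl(\tfrac{edn}{n}\bigr)^n = (ed)^n$ together with $(d-1)n + 1 \geq 1$. This proof is essentially textbook; no step is a genuine obstacle. The only mild conceptual care is the padding reduction to full $d$-ary trees, which uses that the tree is plane (ordered) so that a canonical position for the null children can be fixed. If the trees in question are abstract rather than plane, the bound is even easier, since an arbitrary ordering of each child set yields an injection into plane trees of the same max degree and same number of internal vertices.
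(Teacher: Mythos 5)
The paper does not prove this proposition; it is cited directly from Knuth's TAOCP (Vol.\ 1, p.\ 396, Ex.\ 11), where it appears as the enumeration of $d$-ary trees by number of internal nodes via the cycle lemma. So your cycle-lemma derivation is in fact a faithful reconstruction of that classical argument: the preorder encoding into $\{d-1,-1\}$-words of length $dn+1$ summing to $-1$, the Dvoretzky--Motzkin rotation argument, and the closed-form simplification $\tfrac{1}{dn+1}\binom{dn+1}{n}=\tfrac{\binom{dn}{n}}{(d-1)n+1}$ are all correct, as is the final estimate $\binom{dn}{n}\le (ed)^n$.

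The one place you should tighten is the padding injection, and it is tied to the ambiguity of ``$n$ internal vertices.'' As written you say ``any internal vertex with $k<d$ children can be padded,'' and then recover the original tree ``by deleting the null children.'' If ``internal vertex'' is read in the usual sense (a vertex with at least one child), then leaves of the original tree are \emph{not} padded and survive into the image as leaves, where they are indistinguishable from the appended nulls; the map is then not injective, and indeed the stated bound would be false under that reading (for $d=2$, $n=1$ there are two plane trees with one non-leaf vertex --- root with one leaf child and root with two --- but the formula gives $1$). The intended reading, consistent with Knuth and with the way the proposition is invoked in Claim~\ref{claim:underline} (where $n$ is the total size of a BFS tree), is that ``$n$ internal vertices'' means $n$ \emph{real} nodes of a $d$-ary tree. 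Under that reading the right injection pads \emph{every} vertex (including original leaves, which become internal nodes with $d$ null children) so that \emph{all} leaves of the image are sentinels; then ``delete all leaves'' recovers the original plane tree and injectivity is immediate. With that one clarification the proof is complete and correct; the alternative, for abstract rather than plane trees, is as you say even easier via any canonical ordering of child sets.
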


We use $\log(\cdot)$ to denote $\log_2(\cdot)$. Finally, throughout the proof, we assume that $n$ (and therefore $d$) is large enough.

\subsection{Proof of Theorem~\ref{thm:main} from Lemma~\ref{lem:main}}

First, a sketch of the proof: Consider a bipartite Cayley graph $\Gamma$ with bipartition $(X,Y)$ with $|X| = |Y| =n$. Every independent set of $\Gamma$ is a subset $A \sqcup B$ such that $A \subseteq X$ and $B \subseteq Y \setminus N(A)$. Moreover, observe that the independence number of $\Gamma$ is $n$ and so one of $A$ or $B$ must have size at most $n/2$. Thus, the total number of independent sets is at most

\begin{equation}
\label{eqn:summ}
\sum_{\substack{A \subset X\\|A| \leq n/2}}2^{n - |N(A)|} + \sum_{\substack{B \subset Y\\|B| \leq n/2}}2^{n - |N(B)|} = 2 \cdot \sum_{\substack{A \subset X\\|A| \leq n/2}}2^{n - |N(A)|}
\end{equation}

where the equality is due to symmetry. The goal is to show that

\[
\sum_{\substack{A \subset X\\\emptyset \neq |A| \leq n/2}}2^{n - |N(A)|} = o(2^{n+1}).
\]

So, the main point behind Lemma~\ref{lem:main} is a way of quantifying the fact that there are not too many sets $A$ for which $2^{-|N(A)|}$ is relatively large.

Henceforth, let $\Gamma$ be a bipartite Cayley graph over an Abelian group $\mathcal{F}$ of order $2n$ and set of generators $D = -D$. We impose a couple of constraints on $D$, namely
\begin{enumerate}
\item $|D| \leq n^{1/3}$, and
\item $|2D| \geq |D| \log^3 |D|$.
\end{enumerate}

We first show that these can be assumed w.l.o.g., when $d = \tilde{\Omega}(\log n)$.

\begin{proposition}
\label{prop:thinning}
Let $D \subseteq \mathcal{F}$ such that $|D| \geq 10 \log{n}(\log{\log{n}})^k$ for some fixed $k > 0$. For any $0< \alpha \leq (\log{\log{n}})^k$, if $|2D| \leq \alpha |D| $ then there is a $D' \subset D$ such that
\begin{enumerate}
\item $D' = -D'$ 
\item $D'$ is a generating set 
\item $|D'| = \Theta\left(\frac{|D|}{\alpha}\right)$
\item $|D' + D'| \geq \alpha |D'| $.
\end{enumerate}
\end{proposition}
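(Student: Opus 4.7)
My plan is to produce $D'$ in two stages: a probabilistic thinning that boosts the relative doubling of $D$, followed by a short deterministic extension that ensures $D'$ generates $\mathcal{F}$.

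\emph{Random thinning.} Fix a small absolute constant $c>0$ and set $p := c/\alpha$. Partition $D$ into symmetric pairs $\{x,-x\}$ (singletons when $2x=0$), and include each pair independently with probability $p$; call the resulting set $D_0$. By construction $D_0=-D_0$, and Chernoff's bound gives $|D_0|=\Theta(|D|/\alpha)$ with high probability. The key claim is that $|D_0+D_0|\ge\alpha|D_0|$ with positive probability. I apply Cauchy--Schwarz in the form $|D_0+D_0|\ge|D_0|^4/E(D_0)$, where $E(\cdot)$ is additive energy, and bound $\E[E(D_0)]$ by classifying each additive quadruple $(a,b,c,d)\in D^4$ with $a+b=c+d$ according to how many of the four sampling pairs it occupies: the generic ones (four distinct pairs, each included with probability $p^4$) number at most $E(D)\le|D|^3$, while the degenerate ones (at most three distinct pairs) number $O(|D|^2)$ and contribute at most $O(p^2|D|^2)$ in total. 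Markov then yields $E(D_0)=O(p^4|D|^3+p^2|D|^2)$ with positive probability, hence $|D_0+D_0|=\Omega(|D|)$. The hypotheses $|D|\ge 10\log n\,(\log\log n)^k$ and $\alpha\le(\log\log n)^k$ force $p^2|D|=\Omega(|D|/\alpha^2)\to\infty$, which is precisely what is needed to make the bound $\Omega(|D|)$ exceed $\alpha|D_0|$ after shrinking $c$.

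\emph{Extension to a generating set.} If $H:=\langle D_0\rangle$ is a proper subgroup of $\mathcal{F}$, pick $d\in D\setminus H$ (which exists since $D$ generates $\mathcal{F}$) and add the symmetric pair $\{d,-d\}$ to $D_0$. By Lagrange's theorem, $|H|$ at least doubles with each such addition, so at most $\log_2|\mathcal{F}|=O(\log n)$ steps suffice to reach $\langle D_0\rangle=\mathcal{F}$. Because $|D_0|\gg\log n$ by hypothesis, these additions do not spoil the $\Theta(|D|/\alpha)$ size bound; moreover, each step enlarges $D_0+D_0$ by at least $|D_0|$ new elements (the translate $D_0+d$ lies in the coset $d+H$, which is disjoint from $D_0+D_0\subseteq H$), while $\alpha|D_0|$ grows by only $O(\alpha)\ll|D_0|$, so the doubling bound is maintained with room to spare.

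The main obstacle is the energy estimate in the first stage: the generic contribution $p^4|D|^3$ and the degenerate contribution $p^2|D|^2$ to $\E[E(D_0)]$ are of the same order precisely when $p^2|D|\asymp 1$, so the regime $|D|/\alpha^2\to\infty$ implied by the hypotheses is essential for the random set $D_0$ to possess sumset size exceeding $\alpha|D_0|$.
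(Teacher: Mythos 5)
Your proof is correct but takes a genuinely different route from the paper's. For the sumset lower bound (property (4)), the paper argues via representation counting: for $u \in D+D$, the representations $\{x,y\}\subseteq D$ with $x+y=u$ are pairwise disjoint, an averaging argument shows at least $|D|/2$ elements of $D+D$ have at least $|D|/(2\alpha)$ representations, each such $u$ lands in $P+P$ with probability $1-(1-p^2)^{r_u}\gg 1-|D|^{-2}$, and a union bound gives $|P+P|\geq|D|/2$ with high probability. You instead bound $\E[E(D_0)]$ by classifying additive quadruples, apply Markov, and use the Cauchy--Schwarz inequality $|D_0+D_0|\geq|D_0|^4/E(D_0)$; this is also sound but requires the extra observations that the degenerate quadruples number $O(|D|^2)$ and that $p^2|D|\to\infty$ so the diagonal contribution $|D_0|^2$ to the energy is dominated by $p^4|D|^3$. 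For generation (property (2)), the paper adds a fixed minimal generating set $S\cup(-S)$, $|S|\le\log n$, all at once and then needs no sumset-growth argument at all: the clean inequality $\alpha|D'|\le 2|D|/5<|D|/2\le|P+P|$, coming directly from the Chernoff upper bound on $|D'|$, already gives property (4). You add generators one pair at a time and track the coset-disjointness growth of the sumset against the $O(\alpha)$ growth of $\alpha|D'|$; this works, with the invariant $|D_0|\ge 2\alpha$, but is more delicate than necessary. Both proofs are valid; the paper's representation-counting step and one-shot extension are shorter, while your energy-based argument is more in the style of standard additive combinatorics and would generalize if one wanted quantitative control on $E(D')$ rather than just on $|D'+D'|$.
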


\begin{proof}
Set $p := \frac{1}{15 \alpha}$. Choose $P$ to be a $p$-random subset of $D$, and $S \subseteq D$ be a minimal generating set of $\mathcal{F}$.  Note that $|S|\leq \log{n}$. Set

\[
D' = P \cup -P \cup S \cup -S
\]

Property $(1)$ and $(2)$ easily follow from the  definition of $D'$. To see that property $(3)$ holds, we use the Chernoff bound (Theorem $1.1$ in~\cite{DP09}): A binomially distributed variable $X\sim \operatorname{Bin}(n, p)$  for all  $0<a\leq 3/2$ we have
\[
\Prob{[|X-\mathbf{E}[X]|\geq a \mathbf{E}[X]]}\leq 2e^{-\frac{a^2}{3}\mathbf{E}[X]}.
\]
Indeed, $\E[|P|]=p|D|=\frac{|D|}{15\alpha}$. By Chernoff's bounds and using that $|D|=\tilde{\Omega}(\log{n})$, we have that with high probability,  $\frac{|D|}{20\alpha} \leq |P|\leq \frac{|D|}{10 \alpha}$.

Thus property (3) follows from the fact that $|S| \leq \log{n} \leq  \frac{|D|}{10 \alpha}$. So we have, with high probability

\begin{equation}
\label{eqn:concentration}
\frac{|D|}{20 \alpha } \leq |D'| \leq \frac{2|D|}{5\alpha }
\end{equation}.

For property $(4)$: For every $u \in D+D$, define 
\[
R_u := \left\{\{x,y\} \subset D~|~x+y = u\right\},
\]

i.e., the set of \emph{representations} of $u$ in $D+D$. Denote $r_u = |R_u|$. Let 
\[
D_{\ell} = \left\{u \in D+D~|~r_u \geq \frac{|D|}{2\alpha}\right\}.
\]

Using an averaging argument, we have that $|D_{\ell}| \geq |D|/2$. For $u \in D' + D'$, define 
\[
R_u' := \left\{\{x,y\} \subset P~|~x+y = u\right\}.
\]
and $r_u' = |R_u'|$. The main observation is that the elements of $R_u$ are pairwise disjoint. Therefore, for each $u \in D_{\ell}$, we have $\mathbb{P}(u \not \in P+P) = \mathbb{P}(r'_u = 0) \leq (1-p^2)^{r_u} \ll \frac{1}{|D|^2}$. By the Union Bound,  and using the fact that $|D_{\ell}| \leq |2D| \leq |D|^2$, we have that with high probability, every $u \in D_{\ell}$ satisfies $r'_u=0$, therefore the following series of inequalities hold.

\[|D'+D'| \geq |P+P| \geq  |D_{\ell}| \geq |D|/2 \geq \alpha|D'|\]
where the last inequality follows from~(\ref{eqn:concentration}).
\end{proof}

So if $|D| > n^{1/3}$, set $D'' \subset D$ to be an arbitrary subset such that that $D'' = -D''$ and $\frac{n^{1/3}}{2} \leq |D'| \leq n^{1/3}$. Otherwise set $D'' = D$. Now if $|2D''| \leq |D''|\log^3 |D''|$, let $D' \subseteq D''$ be the subset guaranteed by Proposition~\ref{prop:thinning}. Otherwise, set $D' = D''$. Now consider the Cayley graph $\Gamma'$ on $\mathcal{F}$ with the generator set $D' \subseteq D$. Since $\Gamma'$ is a subgraph of $\Gamma$, we have that $2^{n+1} \leq i(\Gamma') \leq i(\Gamma)$. The first inequality is because $X$ and $Y$ are both independent sets of $\Gamma$, each of size $n$. Henceforth, at the cost of a factor of $\frac{1}{\log^{O(1)} |D|}$, we shall assume that the generating set $D$ satisfies $|D| \leq n^{1/3}$ and $|2D| \geq |D| \log^3|D|$. Theorem~\ref{thm:main}, therefore follows from

\begin{theorem}\label{thm:main2}
Let $\Gamma$ be a connected undirected Cayley graph on $2n$ vertices and generating set $D$ such that 
\begin{enumerate}
\item $\frac{|D|}{\log^{8}|D|} = \Omega(\log n)$,
\item $|D| \leq n^{1/3}$, and
\item $|2D| \geq |D| \log^3 |D|$.
\end{enumerate}
Then,
\[
i(\Gamma) \leq 2^{n+1} \cdot (1+o(1)).
\]
\end{theorem}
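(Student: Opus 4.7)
The plan is to reduce to the bipartite case, isolate the $A=\emptyset$ term, and bound the remainder by a cluster expansion over 2-linked components combined with Lemma~\ref{lem:main} and the additive isoperimetry provided by Corollary~\ref{thm:CDGEN}.

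\textbf{Reduction and setup.} If $\Gamma$ is not bipartite, I would pass to $\Gamma\times K_2$, which is a connected bipartite Cayley graph on $\mathcal{F}\times\mathbb{Z}_2$ with generating set $D\times\{1\}$, still satisfying hypotheses (1)--(3). Theorem~\ref{thm:ZHAO10} then gives $i(\Gamma)\leq\sqrt{i(\Gamma\times K_2)}\leq 2^{n+1/2}(1+o(1))$, so it suffices to treat bipartite $\Gamma$. With parts $X,Y$ of size $n$, every independent set has the form $A\sqcup B$ with $A\subseteq X$ and $B\subseteq Y\setminus N(A)$; since $|A|+|B|\leq n$, I may assume $|A|\leq n/2$ at a cost of a factor $2$. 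Isolating the $A=\emptyset$ term,
\[
i(\Gamma) \;\leq\; 2^{n+1} + 2\cdot 2^n\cdot S, \qquad S \;:=\; \sum_{\substack{A\neq\emptyset \\ |A|\leq n/2}} 2^{-|N(A)|}.
\]
It remains to show $S=o(1)$.

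\textbf{Cluster expansion and the small case.} Any nonempty $A$ decomposes uniquely into its $2$-linked components $A_1,\dots,A_k$, whose closures $[A_i]$ and neighborhoods $N(A_i)$ are pairwise disjoint (vertices in distinct components share no neighbor). Hence $2^{-|N(A)|}=\prod_i 2^{-|N(A_i)|}$, and a standard cluster-expansion bound gives $S\leq e^T-1$, where $T:=\sum_{A'\text{ 2-linked},\, A'\neq\emptyset} 2^{-|N(A')|}$. I split $T=T_{\mathrm{small}}+T_{\mathrm{big}}$ by whether $|[A']|\leq n/2$. For $T_{\mathrm{small}}$, Corollary~\ref{thm:CDGEN} applied with $M=[A']$, $N=D$ gives either $|N(A')|\geq|[A']|+|D|/2$, or $|[A']+2D|=|[A']+D|$. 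In the second case, iterating together with the fact that $D$ generates $\mathcal{F}$ forces $N(A')$ to be a union of cosets of the index-$2$ subgroup $X\leq\mathcal{F}$, and since $Y$ is a single $X$-coset and $N(A')\neq\emptyset$ we obtain $N(A')=Y$, whence $t:=|N(A')|-|[A']|\geq n/2$. Thus $t=\Omega(|D|)$ in either case, and Lemma~\ref{lem:main} gives
\[
T_{\mathrm{small}} \;\leq\; \sum_{a\geq 1}\sum_{t\geq |D|/2} 2^{(a+t)-\Omega(t)}\cdot 2^{-(a+t)} \;\leq\; n\cdot 2^{-\Omega(|D|)} \;=\; o(1),
\]
using hypothesis~(1).

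\textbf{Non-small case by duality, and the main obstacle.} For non-small 2-linked $A'$ (i.e., $|[A']|>n/2$), I would dualize via $\hat{B}:=Y\setminus N(A')$; a direct calculation gives $[\hat{B}]_Y=\hat{B}$, $N(\hat{B})=X\setminus[A']$, $|\hat{B}|=n-|N(A')|<n/2$, and $t_{\hat{B}}=t$. Each 2-linked component of $\hat{B}$ on the $Y$-side is a small 2-linked $Y$-closure, and applying the same analysis on $Y$ (valid since $D=-D$) bounds $T_{\mathrm{big}}$ by $o(1)$, so $T=o(1)$ and $S\leq e^T-1=o(1)$. The most delicate point is this duality step: the map $A'\mapsto\hat{B}$ is many-to-one, and the 2-linked decomposition of $\hat{B}$ on the $Y$-side need not correspond bijectively to that of $A'$ on the $X$-side, so transferring the count of non-small 2-linked $X$-sets to Lemma~\ref{lem:main} on the $Y$-side requires careful bookkeeping. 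The small case, in contrast, is a clean application of Lemma~\ref{lem:main} combined with Olson's theorem via Corollary~\ref{thm:CDGEN}.
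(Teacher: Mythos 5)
Your proof reproduces the paper's overall strategy (reduction to the bipartite case via Zhao's theorem, cluster expansion over $2$-linked components, Lemma~\ref{lem:main}, and Olson's theorem via Corollary~\ref{thm:CDGEN} to get $t\geq d/2$ for small components), and your handling of the small case — including the careful case analysis of Corollary~\ref{thm:CDGEN}, noting that $|[A']+2D|=|[A']+D|$ forces $N(A')=Y$ and hence $t\geq n/2$ — is correct and slightly more explicit than the paper's. However, there is a genuine gap exactly where you flag one, and the fix is not ``careful bookkeeping'' of your duality map but a different smallness condition at the outset. You reduce to $|A|\leq n/2$, which is a bound on the \emph{cardinality} of $A=I\cap X$; this does not ensure that the $2$-linked components of $A$ are small in the paper's sense $|[A_i]|\leq n/2$, which is what Lemma~\ref{lem:main} requires. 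Your duality $A'\mapsto\hat{B}=Y\setminus N(A')$ produces a small closed set, but the map is highly non-injective (every $A'$ with the same closure maps to the same $\hat{B}$), $\hat{B}$ need not be $2$-linked, and converting a bound on the number of small closed $Y$-sets into a bound on $\sum_{A'\text{ non-small, }2\text{-linked}}2^{-|N(A')|}$ is not a routine transfer — it would need its own counting lemma, which you do not supply.

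The paper sidesteps this entirely by starting from a stronger observation: for any independent set $I$ with $A=I\cap X$ and $B=I\cap Y$, one has $[A]\cap N(B)=\emptyset$ (if $u\in[A]$ then $N(u)\subseteq N(A)$, which is disjoint from $B$), and since $|N(B)|=|N([B])|\geq|[B]|$ this gives $|[A]|+|[B]|\leq n$. So one may assume $|[A]|\leq n/2$, i.e.\ $A$ is small \emph{in the closure sense}, at a cost of the same factor of $2$. Smallness in this sense is inherited by every $2$-linked component ($A_i\subseteq A$ implies $[A_i]\subseteq[A]$), so the cluster sum $T$ ranges only over small $2$-linked sets and Lemma~\ref{lem:main} applies directly with no ``big'' case to handle. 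Replacing your reduction ``$|A|\leq n/2$'' with ``$|[A]|\leq n/2$'' closes the gap and makes your $T_{\mathrm{big}}$ term vanish; with that change, your argument becomes essentially identical to the paper's.
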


\begin{proof}[Proof of Theorem~\ref{thm:main2}]
We first prove the theorem for the case when $\Gamma$ be bipartite with bipartition $(X,Y)$. Recall that we say a subset $A \subset X$ or $A \subset Y$ is \emph{small} if $|[A]| \leq n/2$. Let $I \in \mathcal{I}(\Gamma)$ be any independent set. Since $|I| \leq n$, we must have that either $I \cap X$ or $I \cap Y$ is small. Thus we have
\begin{align*}
i(\Gamma) & \leq 2\sum_{A \subseteq X,~\text{small}}2^{n - |N(A)|} \\
& = 2^{n+1}\sum_{A \subseteq X,~\text{small}}2^{-|N(A)|} \\
& \leq 2^{n+1} \sum_k \sum_{\substack{A_1,\ldots,A_k  \subseteq X \\ \text{small, }2-\text{linked}}}2^{- \sum_{i \leq k} |N(A_i)|} \\
& \leq 2^{n+1} \sum_k \frac{1}{k!}\left(\sum_{\substack{A~\text{small, } \\ 2-\text{linked}}}2^{- |N(A)|} \right)^k \\
& \leq 2^{n+1} \exp\left(\sum_{\substack{A~\text{small, } \\ 2-\text{linked}}}2^{- |N(A)|}\right) \\
& = 2^{n+1} \exp\left(\sum_{\substack{1 \leq a \leq n/2\\ d/2 \leq t \leq n}}\sum_{\substack{A~2-\text{linked}\\|[A]| = a, |N(A)| = g}}2^{- g}\right) \\
& = 2^{n+1} \exp\left(\sum_{\substack{1 \leq a \leq n/2\\ d/2 \leq t \leq n}}|\mathcal{G}(a,g)|2^{- g}\right) \\
& \leq 2^{n+1}\exp\left(n \cdot \sum_{d/2 \leq t} 2^{-\Omega(t)}\right) \\
& \leq 2^{n+1}\exp\left(n \cdot 2^{-\Omega(d)}\right) \\
& = 2^{n+1}(1 + o(1)).
\end{align*}

Here, the second last inequality is by Lemma~\ref{lem:main}. The last inequality is because Theorem~\ref{thm:CDGEN} gives that for any set $A$ of size at most $n/2$, $|A+D| \geq |A| + \frac{t}{2}$. Finally, the last (asymptotic) equality follows because $d = \omega(\log n)$.

To handle the case when $\Gamma$ non-bipartite, let $\Gamma' = \Gamma \times K_2$, and observe that $\Gamma'$ is a Cayley graph on $4n$ vertices on the group $\mathcal{F} \times \mathbb{Z}_2$ with generator set $D \times \{1\}$. Moreover, the fact that $\Gamma$ is non-bipartite implies that $\Gamma'$ is also connected (see, for example, Theorem $3.4$ in~\cite{BHM80}). So by Theorem~\ref{thm:ZHAO10} and the preceding proof, we have
\[
i(\Gamma)^2 \leq i(\Gamma') \leq 2^{2n + 1}(1 + o(1))
\]
which gives that $i(\Gamma) \leq 2^{n + 1/2}(1 + o(1))$.
\end{proof}

\section{Proof of Lemma~\ref{lem:main}}
\label{sec:mainlem}
Recall that we are given an undirected bipartite Cayley graph with bipartition $X \cup Y$ with $|X|=|Y|=n$ and generator set $D$. We have the following three conditions on $D$:

\begin{enumerate}
\item $|D| \leq n^{1/3}$,
\item $\log n = O\left(\frac{|D|}{\log^{8}|D|}\right) $, and
\item $|2D| \geq |D| \log^{3} |D|$.
\end{enumerate}

Also recall for a subset $A \subset X$, we say that $A$ is small if $|[A]| \leq n/2$. We use $G$ to denote $N(A)$, with $|[A]| = a$, $|G| = g$, and $t = g - a$. Let us abbreviate $d := |D|$ and $d_2 := |2D|$. The proof of Lemma~\ref{lem:main} has three components, and we describe them here.

Define the \emph{boundary} of $G$ as
\[
G' := \{v \in G~|~ N(v) \cap [A]^c \neq \emptyset\},
\]

which are the vertices in $G$ that are connected to vertices outside $[A]$. The first component to the proof of Lemma~\ref{lem:main} is the following:

\begin{lemma}
\label{lem:bdry} 
There is a family $\mathcal{C}_1 \subset 2^{Y}$ that satisfies the following three properties 
\begin{enumerate}
\item $|C| = O\left(\frac{t d_2}{\log^3 d}\right)$ for every $C \in \mathcal{C}_1$
\item $|\mathcal{C}_1| \leq 2^{O\left(\frac{t}{\log d}\right)}$ 
\item For every $2$-linked $A\subset X$, there is a $C \in \mathcal{C}_1$ such that $G' \subseteq C$.
\end{enumerate}
\end{lemma}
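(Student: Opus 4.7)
The plan is to adapt Sapozhenko's container-based boundary argument, with the Plünnecke-Ruzsa inequality (Theorem~\ref{thm:PR}) and Olson's theorem (Theorem~\ref{thm:Olson}) playing the roles that vertex expansion and small codegrees play in the classical hypercube proof. The first step is to introduce an auxiliary bipartite graph $H$ whose parts are $G'$ (the set to be covered) and a witness set $W$, either $W \subseteq X \setminus [A]$ with $\Gamma$-edges or $W \subseteq Y \setminus G$ with $\Gamma^2$-edges. Control on $|W|$ will come from Plünnecke-Ruzsa applied to $[A]+kD$ for small $k$: from $|[A]+D|/|[A]| = g/a$, Theorem~\ref{thm:PR} yields $|[A]+2D|\leq (g/a)^2 a = g^2/a$, with a similar estimate for $|[A]+3D|$, so $|W|$ is polynomially controlled in $t$ (up to polylog-in-$d$ factors) after subtracting the interior $[A]$ or $G$ and using $t \geq d/2$ from Corollary~\ref{thm:CDGEN}.

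The covering step is an application of Theorem~\ref{thm:cover} to $H$. With a minimum degree $\delta$ on the $G'$-side and the trivial upper bound $d_2$ on the $W$-side, one extracts $\tilde F \subseteq W$ of size at most $(|W|/\delta)(1+\ln d_2) = O((t/\delta)\log d)$ that dominates $G'$. Taking $C := \tilde F + 2D$ (or $\tilde F + D$, depending on the choice of $W$) gives $G'\subseteq C$ with $|C|\leq |\tilde F|\cdot d_2$. Aiming for $\delta = \Omega(d)$ gives $|\tilde F| = O((t/d)\log d)$, hence $|C| = O(td_2\log d/d)$, which is $O(td_2/\log^3 d)$ as soon as $d \geq (\log d)^4$, an inequality implied by the hypotheses. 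Counting containers, $|\mathcal C_1|\leq\binom{|W|}{|\tilde F|}\leq 2^{|\tilde F|\log|W|} = 2^{O((t\log d\log n)/d)}$, which collapses to $2^{O(t/\log d)}$ whenever $d \geq (\log d)^2\log n$, comfortably implied by $d/\log^8 d = \Omega(\log n)$.

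The main obstacle is the min-degree lower bound $\delta = \Omega(d)$. The average degree of $G'$ in the $\Gamma$-version of $H$ is easy to compute: double-counting edges between $X \setminus [A]$ and $G$ (using $|N(u)\cap G| = d$ for $u\in[A]$) gives exactly $dt$ such edges, so the average is $dt/|G'| \geq dt/g$, which can be far below $d$ when $|G'|$ is comparable to $g$. Vertices of $G'$ falling below this threshold will have to be handled separately, either by showing they are sparse enough to absorb directly into $C$ or by an inner iteration, as is standard in Sapozhenko's method. Amplifying the only ``free'' witness (a single $u\in N(v)\cap[A]^c$ together with one $w\in N(u)\setminus G$) to $\Omega(d)$ witnesses is where the hypothesis $|2D|\geq|D|\log^3|D|$ is essential: it rules out highly structured $D$ (e.g., near-cosets of a proper subgroup) in which no such amplification is possible. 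Concretely, I would apply Theorem~\ref{thm:PRGEN} to the incremental differences $|([A]+3D)\setminus([A]+2D)|$ and $|([A]+2D)\setminus([A]+D)|$ to produce a global lower bound on the edge count of $H$ that scales like $t\cdot d$, forcing most vertices of $G'$ to have near-average degree. This step is where I expect the bulk of the polylogarithmic slack in the hypotheses of the lemma to be absorbed.
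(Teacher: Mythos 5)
Your overall framework---using Pl\"unnecke--Ruzsa to control auxiliary neighborhood sizes and then the Lov\'asz--Stein cover theorem to extract a small cover of $G'$---is the right spirit, and your high-level size and counting estimates ($|C| = O(td_2\log d/d)$, $|\mathcal{C}_1| \leq 2^{O(t\log d\log n/d)}$, absorbed by $d/\log^8 d = \Omega(\log n)$) track those of the actual proof. The problem is that you correctly identify the crux---the need for a minimum-degree bound $\delta = \Omega(d)$ on the $G'$-side of your auxiliary graph---and then gesture at, rather than close, the gap. There is no way to argue via Theorem~\ref{thm:PRGEN} that ``most vertices of $G'$ have near-average degree'': the degree distribution of boundary vertices is genuinely non-uniform, and a single application of Theorem~\ref{thm:cover} to a single auxiliary bipartite graph $(G',W)$ cannot cover the low-degree part.

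The paper closes this gap with a \emph{two-stage thinning and a three-way decomposition of the boundary}, rather than a uniformity argument. First, Lemma~\ref{lem:PRUSE2} (iterated Theorem~\ref{thm:PRGEN}) is applied to $M=[A]$ with $c=\log^2 d$ to extract a large closed subset $\underline{A}\subseteq [A]$ with $|[A]\setminus\underline{A}|\leq 4t/\log^2 d$ and \emph{all} iterated neighborhoods $N^i(\underline{A})$ controlled for $i\leq 4$. This is crucial: you only propose controlling $[A]+2D,[A]+3D$ for the original $[A]$, but with the original set these bounds degrade by factors of $d$ per step (Fact~\ref{thm:dvs2d}), which is too lossy; only the thinned $\underline{A}$ has polylogarithmic blow-up. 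One then writes $G' \subseteq \underline{G}' \cup N([A]\setminus\underline{A})$, enumerates the second piece by a separate BFS-tree argument (Claim~\ref{claim:underline}), and partitions $\underline{G}'$ by degree into $A_0 := N^2(\underline{A})\setminus\underline{A}$: the high-degree part $G_S$ is covered directly via Theorem~\ref{thm:cover} into $A_0$, while the low-degree part $G_L$ requires a \emph{second} application of Lemma~\ref{lem:PRUSE2}, this time to $M=\underline{G}^c$, extracting $M'\subseteq\underline{G}^c$ with $|\underline{G}^c\setminus M'| \leq 2t/\log^3 d$ and controlled $N^i(M')$; this splits $G_L$ into $G''=G_L\cap N^2(M')$ (covered via Theorem~\ref{thm:cover} into the controlled set $A_2=(M'+3D)\cap\underline{A}$) and $G_L\setminus G''$ (shown to be contained in $N^2$ of the small exceptional set $(\underline{G}^c\setminus M')\cap G_0$). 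The container $C$ is then the union of four explicitly small pieces, and the description cost of each piece is bounded separately. So the ``inner iteration'' you allude to is concretely realized as a second Pl\"unnecke--Ruzsa thinning on the complement side, not as a degree-regularization. Also, a small point: the hypothesis $|2D|\geq|D|\log^3|D|$ is not what saves the boundary lemma---it is used in Lemma~\ref{lem:varphi}---whereas the $\log^3 d$ in the size bound $|C|=O(td_2/\log^3 d)$ comes from the choice of $c$ in the second thinning.
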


 Lemma~\ref{lem:bdry} offers a starting point from which ideas from the aforementioned container method may be used effectively. The first of these ideas is the notion of ``$\varphi$ -approximation'':

Let $\varphi = d - \frac{\sqrt{d}}{\log d}$, and define for every $\alpha >0$,  $G_{\alpha} := \{u \in G~|~ d_{[A]}(u) \geq \alpha\}$. So in particular, $G_d = \{u \in G~|~N(u) \subseteq [A]\}$.

\begin{definition}A set $F \subseteq G$ is a $\varphi$-approximation for $A$ if
\begin{enumerate}
\item $F \supseteq G_{\varphi}$
\item $N(F) \supseteq [A]$
\end{enumerate}
\end{definition}

Second, is notion of ``$\psi$-approximation''. Let $\psi = d/\log d$. 

\begin{definition}
For a $d$-regular bipartite graph with bipartition $(X,Y)$, we say that $(S,F) \in 2^X \times 2^Y$ is a $\psi$-approximation for $A$ if $S \supseteq [A]$, $F \subseteq G$ and the following two conditions hold:

\begin{enumerate}
\item $d_F(u) \geq d - \psi$ for every $u \in S$
\item  $d_{X \setminus S}(v) \geq d - \psi$ for every $v \in Y \setminus F$.
\end{enumerate}
\end{definition}

The following is a useful property of the $\psi$-approximation (Lemma $5.3$ in~\cite{GALVIN19}).

\begin{lemma}
\label{lem:property}
Let $(S,F)$ be a $\psi$-approximation for $A$. Then $|S| \leq |F| + 2\frac{t \psi}{d - \psi}$.
\end{lemma}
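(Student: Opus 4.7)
The plan is to prove the lemma by a single double-counting of the edges between $X \setminus [A]$ and $G$. The first observation is that, by definition of the closure, every $u \in [A]$ has $N(u) \subseteq G$, so it contributes all $d$ of its edges to $G$; on the other hand, every $v \in G \subseteq Y$ trivially has all $d$ of its edges landing in $X$. Subtracting $e([A], G) = da$ from $e(X, G) = dg$ yields the exact count
\[
e(X \setminus [A], G) \;=\; dg - da \;=\; dt.
\]

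Next, I would split this count as $e(X \setminus [A], F) + e(X \setminus [A], G \setminus F)$ and lower bound each summand via the $\psi$-approximation properties. For the first piece, restrict the left endpoint to $S \setminus [A] \subseteq X \setminus [A]$ (using $[A] \subseteq S$) and invoke $d_F(u) \geq d - \psi$ for $u \in S$, to get $e(X \setminus [A], F) \geq (d-\psi)|S \setminus [A]|$. For the second piece, any $v \in G \setminus F$ lies in $Y \setminus F$, so property (2) of the $\psi$-approximation gives $d_{X \setminus S}(v) \geq d - \psi$; together with $[A] \subseteq S$ this forces $d_{X \setminus [A]}(v) \geq d - \psi$, yielding $e(X \setminus [A], G \setminus F) \geq (d-\psi)|G \setminus F|$.

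Adding the two lower bounds and comparing with the exact count gives
\[
(d - \psi)\bigl(|S \setminus [A]| + |G \setminus F|\bigr) \;\leq\; dt,
\]
so $|S \setminus [A]| + |G \setminus F| \leq t + t\psi/(d-\psi)$. Using $[A] \subseteq S$ and $F \subseteq G$ one has the identity
\[
|S| - |F| \;=\; \bigl(a + |S \setminus [A]|\bigr) - \bigl(g - |G \setminus F|\bigr) \;=\; |S \setminus [A]| + |G \setminus F| - t,
\]
and substituting the previous inequality yields $|S| - |F| \leq t\psi/(d-\psi)$, which is in fact a factor of two sharper than the stated bound, so the extra factor of $2$ in the statement is simply a convenient cushion. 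I do not anticipate any real obstacle: the only subtle point is to notice that $N([A]) = G$ (so that $e(X \setminus [A], G) = dt$ holds with equality), which follows directly from the definition $[A] = \{u \in X : N(u) \subseteq G\}$ and from $[A] \supseteq A$.
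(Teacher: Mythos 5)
Your proof is correct, and in fact it is a slightly sharper version of the standard argument. The key step — computing $\nabla(G,[A]^c) = dt$ exactly (using $e([A],G)=da$ from the closure definition and $e(X,G)=dg$ from $d$-regularity) and then splitting this single edge count into the disjoint pieces $e(X\setminus[A],F)$ and $e(X\setminus[A],G\setminus F)$ before applying the two degree conditions of the $\psi$-approximation — is precisely what makes the bound work, and your bookkeeping (including the identity $|S|-|F| = |S\setminus[A]| + |G\setminus F| - t$) is right. Note that the paper does not prove this lemma itself; it cites it as Lemma 5.3 of the Galvin survey. Your argument is the standard double-counting one that source uses, and as you observe it actually yields $|S| \le |F| + \frac{t\psi}{d-\psi}$, a factor-of-two improvement on the stated inequality; the extra factor in the statement is harmless slack, and the weaker bound is all that the later applications require.
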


The second component to the proof of Lemma~\ref{lem:main} is the following:

\begin{lemma}
\label{lem:varphi} 
For every $C \in \mathcal{C}_1$, there is a family $\mathcal{C}_2(C) \subset 2^Y$ of size at most 
\[
2^{O\left(\frac{t }{\log d}\right)}
\] 
such that $\mathcal{C}_2(C)$ contains a $\varphi$-approximation for every small set $A\subset X$  whose boundary is contained in $C$. 
\end{lemma}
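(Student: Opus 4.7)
The plan is to follow the template of the $\varphi$-approximation step in Sapozhenko's container method, with additive combinatorics replacing the usual expansion assumptions. The family $\mathcal{C}_2(C)$ will be the image of a map sending each small ``fingerprint'' subset $T$ of $C$, of cardinality $s = O(t/\log^2 d)$, to a candidate $\varphi$-approximation $F = F(T,C)$. Since $|C| = O(t d_2/\log^3 d)$ and $d_2 \leq d^2$ give $\log |C| = O(\log d + \log t)$, the count
\[
\binom{|C|}{s} \;\leq\; 2^{\,s\log(e|C|/s)} \;=\; 2^{O(t/\log d)}
\]
is then automatic, and the only remaining task is to exhibit, for every admissible $A$, a suitable fingerprint $T$ together with the reconstruction rule $F(T,C)$.

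For each small $A$ with $G' \subseteq C$, I would produce the fingerprint $T$ by applying the Lov\'asz--Stein covering theorem (Theorem~\ref{thm:cover}) to the bipartite graph between $[A]$ and $C$. For $u \in [A]$ the degree into $C$ satisfies $d_C(u) \geq d_{G'}(u) = d - d_{G_d}(u)$, so the covering argument yields a small $T \subseteq C$ with $N(T) \supseteq [A]$ provided most $u \in [A]$ have few neighbors in the interior $G_d$. I would then define $F = F(T,C)$ by a closure-type rule such as $F = \{v \in Y : |N(v) \cap N(T)| \geq \varphi\}$, suitably intersected with a set known to lie in $G$, and verify the three defining properties $G_\varphi \subseteq F \subseteq G$ and $N(F) \supseteq [A]$ using the covering property of $T$ together with the containment $G_\varphi \cap G' \subseteq C$.

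The main obstacle is the set of ``deep'' vertices $[A]^{\mathrm{deep}} := \{u \in [A] : d_{G_d}(u) > \varphi\}$, for which $d_C(u)$ is small and the covering by $T$ may fail. This is where the additive structure intervenes. Using Corollary~\ref{thm:CDGEN} together with the hypothesis $|2D| \geq d\log^3 d$, one first observes that for any small $A$ the sumset $A+D$ cannot be closed under $+D$, forcing $t \geq d/2$. More refined PRP-type bounds (Theorems~\ref{thm:PR} and~\ref{thm:PRGEN}) applied to subsets of $[A]$ then control the iterated sumsets $[A] + jD$, and translate (via double-counting the edges between $[A]^{\mathrm{deep}}$ and $G_d$) into a bound of the form $|[A]^{\mathrm{deep}}| = O(t/\log^2 d)$. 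The deep vertices can then be listed explicitly and appended to $T$ without breaking the $2^{O(t/\log d)}$ budget. Making this last additive-combinatorial bound precise is the technical crux, and is what substitutes for the explicit expansion assumption that is available in the classical hypercube setting.
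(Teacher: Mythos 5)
Your approach has a fundamental gap, and it is precisely at the point you flag as ``the technical crux.''

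The claimed bound $|[A]^{\mathrm{deep}}| = O(t/\log^2 d)$ is false in general. The set $G_d$ of interior $G$-vertices can have size close to $a = |[A]|$, and correspondingly $\Omega(a)$ vertices of $[A]$ can have nearly all of their neighbors inside $G_d$. The hypotheses of Lemma~\ref{lem:main} do not prevent this: take, say, $\Gamma$ a Cayley graph on $\mathbb{Z}_{2n}$ with a generator set $D$ of odd residues satisfying $|2D| \geq d\log^3 d$, and let $[A]$ be the even residues in a long interval of length $\Theta(n)$; then $t$ is controlled by the diameter of $D$ (so is $\mathrm{poly}(d)$), while $a = \Theta(n)$ and essentially all of $[A]$ is deep. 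No additive-combinatorial bound of PRP or Olson type can yield $|[A]^{\mathrm{deep}}| = O(t/\log^2 d)$, because the hypotheses are simply compatible with a huge interior. Even granting the bound, the step ``list the deep vertices explicitly'' would cost $\binom{n}{\Theta(t/\log^2 d)} = 2^{\Theta(t\log n/\log^2 d)}$, and under the standing assumption $\log n = O(d/\log^8 d)$ this is far beyond $2^{O(t/\log d)}$; the fingerprint sets in the paper are always subsets of sets of size $\mathrm{poly}(t,d)$ (namely $C$ or the contracted vertex set $X'$), never of $X$, and this is essential for the counting. A separate arithmetic problem: covering $[A]$ by vertices of $C$ via Theorem~\ref{thm:cover} gives $|T| \gtrsim |C|\log d / d = \Omega\bigl(td_2/(d\log^2 d)\bigr) = \Omega(t\log d)$ using $d_2 \geq d\log^3 d$, already exceeding the budget, so the quoted $|T| = O(t/\log^2 d)$ does not follow from the covering theorem either.

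The paper avoids this entirely by \emph{not} trying to cover or enumerate the interior of $[A]$. Instead, it contracts each $v \in Y \setminus C$ together with $N(v) \subset X$; since $G' \subseteq C$, every such $v$ has its neighborhood entirely inside $[A]$ or entirely inside $[A]^c$, so the contraction respects the partition. The additive combinatorics (Corollary~\ref{corr:2nbhd}, via Pl\"unnecke--Ruzsa--Petridis and Olson, and this is where $|D|\leq n^{1/3}$ enters) is used to show that each contracted super-vertex arising from $[A]$ has degree at least $d_2/6$ in the contracted graph. A $p$-random fingerprint $Q_0 \subseteq C$ with $p = \Theta(\log d / d_2)$ then hits the neighborhood of every such super-vertex with probability $1 - d^{-\Omega(1)}$, so only a tiny exceptional set $Q_2$ needs to be recorded; and $Q_2$ lives in the contracted vertex set of size $O(td^3)$, not in $X$. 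The deep interior of $[A]$ is then recovered implicitly as the union of the identified contracted groups, at zero enumeration cost. So the role of additive structure is not to bound the number of deep vertices (it cannot), but to guarantee large degree after contraction, which is what replaces the expansion hypothesis of the classical container lemma.
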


The requirement that $\Gamma$ is an ``expander'' is a crucial point in ~\cite{SAPOZHENKO87}, which does not apply to general Cayley graphs. As mentioned before, the main idea here is to try and overcome this by using $\mathcal{C}_1$. This lemma is the only place where we use the fact that $|D| \leq n^{1/3}$ and the fact that $A$ is small. Let us denote $\mathcal{C}_2 := \bigcup_{C \in \mathcal{C}_1}\mathcal{C}_2(C)$.

The final component to the proof of Lemma~\ref{lem:main} is the following:

\begin{lemma}
\label{lem:psi}
For every $F \in \mathcal{C}_2$, there is a family $\mathcal{C}_3(F) \subseteq 2^{X} \times 2^{Y}$ of size at most 
\[
2^{O\left(\frac{t\log^4 d}{\sqrt{d}}\right)}
\]
which contains a $\psi$-approximation for every $A$ such that $F \in \mathcal{C}_2$ which is a $\varphi$-approximation for $A$.
\end{lemma}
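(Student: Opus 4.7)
My plan is to carry out the Cayley-graph analogue of Sapozhenko's classical $\varphi\to\psi$ refinement, replacing the expansion hypothesis used in the hypercube proof with the structural input from Lemma~\ref{lem:bdry}. Each $F \in \mathcal{C}_2$ is produced inside some $\mathcal{C}_2(C)$ with $C \in \mathcal{C}_1$, and I treat such a $C$ as part of the data attached to $F$. For any small $2$-linked $A$ that $F$ is a $\varphi$-approximation for, one has $G'(A) \subseteq C$, and since $F \supseteq G_\varphi(A)$ the missing part $G \setminus F \subseteq G \setminus G_\varphi \subseteq G' \subseteq C$ is confined to the known set $C$. A standard defect count (the sum of $d - d_{[A]}(v)$ over $v \in G$ equals $dt$) then yields $|G\setminus F| \leq dt/(d-\varphi) = O(t\sqrt{d}\log d)$.

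With this in hand, set $S_0 := \{u \in X : d_F(u) \geq d-\psi\}$. For $u \in [A]\setminus S_0$ one has $d_{G\setminus F}(u) > \psi$ (since $N(u)\subseteq G$), and for $v \in G\setminus F$ one has $d_{[A]}(v) < \varphi$. Double-counting edges between these sets gives $|[A]\setminus S_0| = O(t\sqrt{d}\log^2 d)$, and the Lov\'asz--Stein covering theorem (Theorem~\ref{thm:cover}) applied to the same bipartite graph produces a cover $W_0 \subseteq G\setminus F$ with $[A]\setminus S_0 \subseteq N(W_0)$ and
\[
|W_0| \leq \frac{|G\setminus F|(1+\ln\varphi)}{\psi} = O\!\left(\frac{t\log^3 d}{\sqrt d}\right) =: k.
\]
Since $W_0 \subseteq C$, I can enumerate $W_0$ inside the known set $C$.

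The family $\mathcal{C}_3(F)$ will consist of candidate pairs $(S_W, F^*_W)$ indexed by $W \subseteq C$ of size at most $k$, where $F^*_W := F \cup W$ and $S_W$ is a deterministic function of $(F,W)$ --- naturally $S_W := S_0 \cup N(W)$, possibly further pruned. Using $|C| = O(t d_2/\log^3 d) = O(t d^2/\log^3 d)$ from Lemma~\ref{lem:bdry}, the count is
\[
|\mathcal{C}_3(F)| \leq \binom{|C|}{k} \leq \left(\frac{e|C|}{k}\right)^{k} = 2^{O(k\log d)} = 2^{O(t\log^4 d/\sqrt{d})},
\]
which matches the desired bound. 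For the intended $W = W_0$, the containments $F^*_W \subseteq G$ and $S_W \supseteq [A]$ are immediate, and the outer degree condition $d_{X\setminus S_W}(v) \geq d-\psi$ for $v \in Y\setminus F^*_W$ follows because any such $v \in G\setminus F^*_W$ satisfies $d_{[A]}(v) < \varphi$ and therefore has only $\psi$-bounded overlap with the mild enlargement $S_W$ of $[A]$.

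The main obstacle is verifying the inner degree condition $d_{F^*_W}(u) \geq d-\psi$ for every $u \in S_W$. For $u \in S_0$ it is automatic, but for $u \in [A]\setminus S_0$ it requires $d_W(u) \geq d_{G\setminus F}(u) - \psi$, which is much stronger than the $d_W(u) \geq 1$ guaranteed by plain Lov\'asz covering. To close this gap I would either iterate the covering at geometrically increasing thresholds, so that $W$ aggregates enough weight on every heavy defect vertex while its total size stays $O(k)$, or else prune $S_W$ down to $\{u \in S_0\cup N(W) : d_{F^*_W}(u) \geq d-\psi\}$ and invoke Lemma~\ref{lem:property} to show that the vertices of $[A]$ one loses still fit inside the $2t\psi/(d-\psi) = O(t/\log d)$ slack in the $\psi$-approximation framework. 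The Pl\"unnecke--Ruzsa--Petridis and Cauchy--Davenport type inputs (Theorem~\ref{thm:PRGEN} and Corollary~\ref{thm:CDGEN}) would be invoked only if control of $|N(W)|$ or of the pruned part is needed to finish this last step.
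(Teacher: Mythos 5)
Your plan is a genuinely different route from the paper's. The paper runs a deterministic two-stage cleaning algorithm (with a fixed order $\ll$): while some $u\in[A]$ has $d_{G\setminus F'}(u)\geq\psi$, add $N(u)$ to $F'$; then set $S\gets\{u\in X:d_{F''}(u)\geq d-\psi\}$; while some $w$ has $d_{S''}(w)>\psi$, remove $N(w)$ from $S''$. The output is enumerated by recording the short sequences of chosen vertices, which live in $N(N^2(F')\setminus F')$ and $N^2(N^2(F')\setminus F')$, sets of size $O(td^4)$ and $O(td^5)$ by Fact~\ref{thm:dvs2d}; the loop lengths are $O(td/(\psi(d-\varphi)))=O(t\log^2 d/\sqrt d)$. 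This never refers back to $C$ at all. Your proposal instead carries $C$ as data attached to $F$ and enumerates a single Lov\'asz--Stein cover $W\subseteq C$; the size bookkeeping is fine, but the construction of the candidate pair $(S_W,F^*_W)$ is not actually a $\psi$-approximation.

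You correctly flag the inner-degree gap: one covering pass gives only $d_W(u)\geq 1$, not $d_W(u)\geq d_{G\setminus F}(u)-\psi$. Of your two proposed fixes, the second (prune $S_W$ and invoke Lemma~\ref{lem:property}) does not work: pruning $S_W$ to $\{u:d_{F^*_W}(u)\geq d-\psi\}$ will typically discard vertices of $[A]$, and $S\supseteq[A]$ is a hard requirement in the definition of a $\psi$-approximation --- Lemma~\ref{lem:property} bounds $|S|-|F|$ but does not permit dropping elements of $[A]$. Your first fix, growing $F$ by whole neighborhoods of badly covered vertices until none remain, is essentially the paper's first loop, so you would be rederiving the paper's construction rather than avoiding it.

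There is also a second, unflagged gap. Your justification of condition $(2)$ --- that $v\in G\setminus F^*_W$ has $d_{[A]}(v)<\varphi$ and hence ``$\psi$-bounded overlap'' with $S_W$ --- is wrong on the scales: $\varphi=d-\sqrt d/\log d$ is far larger than $\psi=d/\log d$, so $d_{[A]}(v)$, and hence $d_{S_W}(v)$ since $S_W\supseteq[A]$, can be close to $d$. Condition $(2)$ is not inherited from the $\varphi$-approximation; it requires a genuine second cleaning pass that removes neighborhoods from $S$, exactly as in the paper's second loop.
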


Define $\mathcal{C}_3 := \bigcup_{F \in \mathcal{C}_2}\mathcal{C}_3(F)$.

Given the above three lemmas, we prove Lemma~\ref{lem:main} as follows:

\subsection{Reconstruction: Proof of Lemma~\ref{lem:main} given Lemmas~\ref{lem:bdry},~\ref{lem:varphi}, and~\ref{lem:psi}}
\label{subsec:recon}

First,we upper bound the size of $\mathcal{C}_3$. Lemmas~\ref{lem:bdry},~\ref{lem:varphi}, and~\ref{lem:psi} imply that

\begin{align*}
|\mathcal{C}_3| & \leq 2^{O\left(\frac{t \log^4 d}{\sqrt{d}}\right)} \cdot |\mathcal{C}_2| \\
& \leq 2^{O\left(\frac{t \log^4 d}{\sqrt{d}}\right)} \cdot 2^{O\left(\frac{t}{\log d}\right)} \cdot |\mathcal{C}_1|\\
& \leq 2^{O\left(\frac{t \log^4 d}{\sqrt{d}}\right)} \cdot 2^{O\left(\frac{t}{\log d}\right)} \cdot 2^{O\left(\frac{t}{\log d}\right)} \\
& \leq 2^{O\left(\frac{t}{\log d}\right)}.
\end{align*}

Then, we use the following, which follows from methods of Kahn and Park~\cite{KP19}, stated explicitly by Park~\cite{PARK21}

\begin{lemma}
For each $\psi$-approximation $(S,F)$, there are at most 
\[
2^{g - \Omega(t)}
\]
sets $A$ such that $(S,F)$ is a $\psi$-approximation for $A$.
\end{lemma}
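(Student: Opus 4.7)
The plan is to fix a $\psi$-approximation $(S,F)$ and bound the number of $A \in \mathcal{G}(a,g)$ for which $(S,F)$ is a $\psi$-approximation, following the Kahn--Park / Park template. First I collect the structural constraints forced by the definitions: Lemma~\ref{lem:property}, combined with $\psi = d/\log d$, gives
\[
|S| \;\leq\; |F| + \frac{2t\psi}{d-\psi} \;\leq\; g + O\!\left(\tfrac{t}{\log d}\right).
\]
Together with $A \subseteq [A] \subseteq S$, $F \subseteq N(A)$, $|[A]|=a$, and $|N(A)|=g$, this forces both $|S \setminus [A]| \leq O(t)$ and $|N(A)\setminus F| \leq O(t)$: the pair $(S,F)$ tightly sandwiches $([A], N(A))$.

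My counting strategy is to decompose through the closure,
\[
\#\{A : (S,F)\text{ is a $\psi$-approximation for } A\} \;=\; \sum_{A^*} \#\{A \subseteq A^* : [A] = A^*\},
\]
where $A^*$ ranges over candidate closures (subsets of $S$ of size $a$). For each $A^*$, the inner count is at most $2^{|A^*|} = 2^a$, since $A \subseteq A^*$. Hence it suffices to bound the number of valid $A^*$ by $2^{O(t/\log d)}$, which yields a total of $2^{a + O(t/\log d)} = 2^{g - t(1-o(1))} = 2^{g - \Omega(t)}$, as required.

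To bound the number of $A^*$ I would write $A^* = S \setminus T$ with $|T| \leq O(t)$ and apply Theorem~\ref{thm:cover} to the bipartite graph between $T$ and $Y \setminus N(A^*)$. The two inputs are: every $u \in T$ has some neighbor in $Y \setminus N(A^*) \subseteq Y \setminus F$ (because $u \notin [A]$), and every $v \in Y \setminus F$ satisfies $d_S(v) \leq \psi$ (from the $\psi$-approximation). Lov\'asz--Stein then produces a compact witness set $W \subseteq Y \setminus F$ covering $T$, and the hypothesis $d/\log^{8}d = \Omega(\log n)$ makes $|W| \log n = O(t/\log d)$, so there are only $2^{O(t/\log d)}$ choices for $W$.

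The main obstacle is reconstructing $T$ from $W$: a naive one-shot reconstruction via $T \subseteq N(W) \cap S$ introduces an additional $\binom{|W|\psi}{|T|} = 2^{O(t\log\log d)}$ factor, which falls short of the target $2^{O(t/\log d)}$. Closing this gap is precisely the content of the Kahn--Park refinement in~\cite{KP19}, extracted in the form needed here by Park~\cite{PARK21}: the one-shot cover is replaced by an iterated/recursive approximation that reduces the $\log\log d$ loss down to $1/\log d$. I would invoke their bound as a black box after verifying that our $(S,F)$ and the degree parameters $d, \psi$ satisfy their hypotheses.
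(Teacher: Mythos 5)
The lemma in question is not actually proved in the paper: the authors state it as a black box, attributing it to ``methods of Kahn and Park, stated explicitly by Park,'' and then use it directly in the reconstruction step. So the honest comparison is not between your proof and the paper's proof (there is none) but between your scaffolding and the bare citation, and your scaffolding has a concrete flaw before it reaches the point where you fall back on that same citation.

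The problem is the Lov\'asz--Stein step. You want to cover $T = S\setminus [A]$ by a small set $W\subseteq Y\setminus F$, and your two inputs are ``every $u\in T$ has \emph{some} neighbour in $Y\setminus F$'' (a degree lower bound of $1$) and ``$d_S(v)\leq\psi$ for $v\in Y\setminus F$''. With that pair of bounds, Theorem~\ref{thm:cover} gives $|W|\leq |B|(1+\ln\psi)$ where $B$ is the relevant side of the bipartition; since $|B|$ is at least of order $|T|=\Omega(t)$ (taking $B = N(T)\cap(Y\setminus F)$ it can be as large as $t\psi$), the theorem does not produce a cover of size remotely close to $O(t/(\log d\,\log n))$, which is what your claim $|W|\log n = O(t/\log d)$ requires. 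The degree condition $d_{Y\setminus F}(u)\leq\psi$ that you do have from the $\psi$-approximation points the wrong way: it gives an \emph{upper} bound on the $T$-side degrees, not the lower bound $a$ that drives the $|B|/a$ factor down. Nothing in the definitions forces a vertex of $S\setminus [A]$ to have many neighbours outside $F$, so the $a=1$ situation is essentially the worst case and cannot be improved by renormalising the bipartition. Consequently the subsequent claim that the only remaining loss is the $T$-reconstruction factor $\binom{|W|\psi}{|T|}=2^{O(t\log\log d)}$ is also unsupported, since it already presupposes the too-small $W$.

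Your arithmetic around the closure decomposition is fine ($\sum_{A^*} 2^{|A^*|}=2^a\cdot\#\{A^*\}$ and $2^{a+O(t/\log d)}=2^{g-\Omega(t)}$), and your instinct that a one-shot cover loses too much and that Kahn--Park's iterated refinement is what closes that gap is also right in spirit. But since you ultimately invoke~\cite{KP19,PARK21} as a black box anyway, the correct and economical move is exactly what the paper does: cite the result (after checking that the ambient graph is $d$-regular bipartite and $\psi=d/\log d$ sits in the regime those papers treat), and skip the intermediate covering argument altogether, since the version of it you wrote down does not go through.
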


Thus we have

\begin{align*}
\mathcal{G}(a,g) & \leq 2^{g - \Omega(t)} \cdot |\mathcal{C}_3| \\
& \leq 2^{g - \Omega(t)}.
\end{align*}

Before we proceed to prove the lemmas~\ref{lem:bdry},~\ref{lem:varphi}, and~\ref{lem:psi}, we prove a few more preliminary results.

\subsection{More preliminaries}
\label{subsec:preliminaries}
We will use a consequence of Theorem~\ref{thm:PRGEN} which tells us that given a bipartite Cayley graph with bipartition $(X,Y)$, we can always select an almost spanning subset of a given vertex set in $X$ or $Y$ whose second and third neighborhood in comparison to the first neighborhood is not much larger. Note that the following is an easy observation, while Lemma~\ref{lem:PRUSE2} shows that in the trivial bound $|M+iD|\leq m+td^i$ can be improved if one chooses an appropriate large subset of $M$.

\begin{fact}
\label{thm:dvs2d}
If $M \subseteq X$ with $|M| = m$ and $|M+D| = m+t$, then for every $i \geq 2$, we have $|M + iD| \leq m + d^i \cdot t$.
\end{fact}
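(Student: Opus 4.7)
The approach is an edge-counting argument paired with a short induction on $i$. The key structural input is that $0 \in 2D$ (since $D = -D$), which yields the nested chain $M \subseteq M + 2D \subseteq M + 4D \subseteq \cdots$ (and likewise along the odd indices), so the ``increments'' $\Delta_i := (M + iD) \setminus (M + (i-2)D)$ are meaningful for $i \ge 2$, with the convention $M + 0D := M$. I will bound each $|\Delta_i|$ and then telescope.

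For the base case, I would establish $|\Delta_2| \le d^2 t$ by a direct edge count in the bipartite $d$-regular graph $\Gamma$. All $dm$ edges leaving $M$ land in $M + D$, so the number of edges from $M + D$ to $X \setminus M$ equals $d(m + t) - dm = dt$. A vertex $z \in X \setminus M$ belongs to $M + 2D$ exactly when it has at least one neighbor in $M + D$, so $|(M + 2D) \setminus M|$ is bounded by this edge count, giving $|\Delta_2| \le dt \le d^2 t$ and hence $|M + 2D| \le m + d^2 t$.

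For the inductive step I would show $\Delta_i \subseteq \Delta_{i-1} + D$ for $i \ge 3$: if $z = w + d' \in \Delta_i$ with $w \in M + (i-1)D$, and $w$ lay in $M + (i-3)D$, then $z$ would lie in $M + (i-2)D$, contradicting $z \in \Delta_i$; so $w \in \Delta_{i-1}$. This gives $|\Delta_i| \le d\,|\Delta_{i-1}|$, and iterating from the base yields $|\Delta_i| \le d^{i-2} |\Delta_2| \le d^i t$. Telescoping the increments of matching parity then gives $|M + iD| \le m + O(d^i t)$, matching the stated bound up to the implicit constant. There is no substantive obstacle here; the authors call this ``an easy observation,'' and indeed both the base edge-count and the inductive inclusion are one-line arguments. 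The only minor wrinkle is the parity split (for odd $i$, the telescoping starts from $|M + D| = m + t$ rather than $|M| = m$), contributing at most an additive $t$ that is dwarfed by $d^i t$.
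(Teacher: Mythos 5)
Your argument is correct, and since the paper leaves this fact unproved (calling it ``an easy observation''), your edge-count base case ($|\Delta_2| \le dt$) together with the inclusion $\Delta_i \subseteq \Delta_{i-1} + D$ is exactly the kind of elementary reasoning the authors had in mind; the telescoping in fact gives $|M+iD| \le m + (d + d^3 + \cdots)t \le m + d^i t$ on the nose, so no ``up to a constant'' hedge is needed. One small simplification worth noting: fixing $e \in D$ and working with $u_j := |M+jD| - |M+(j-1)D+e|$ (so $u_1 = t$, $u_{j+1} \le d\,u_j$ by the same neighbor argument, and $|M+iD| = m + \sum_{j\le i} u_j$ since $M+(j-1)D+e$ is a size-preserving translate sitting inside $M+jD$) dispenses with the parity split and the edge count in one stroke, giving $|M+iD| \le m + t\sum_{j=0}^{i-1} d^j \le m + td^i$ directly.
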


\begin{lemma}
\label{lem:PRUSE2}
Let $M \subseteq X$ with $|M| = m$, and $|M+D| = m + t$. Then, for any $k \in \mathbb{N}$ and $c \geq 4$, there is an $M^{(k)} \subseteq M$ with $|M \setminus M^{(k)}| \leq k \cdot \frac{t}{c}$ and 
\[
|M^{(k)} + (i+1)D| \leq m + (2i)^{i+1} \cdot c^i \cdot t
\]
for each $i \leq k$.
\end{lemma}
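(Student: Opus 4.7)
The plan is induction on $k$, with one application of Theorem~\ref{thm:PRGEN} per step to refine $M^{(k-1)}$ into $M^{(k)}$. The base case $k=0$ is immediate: set $M^{(0)} := M$, and the hypothesis $|M + D| = m + t$ handles the $i = 0$ instance (read appropriately, since the stated formula degenerates there).

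For the inductive step, assume $M^{(k-1)} \subseteq M$ satisfies $|M \setminus M^{(k-1)}| \leq (k-1) t/c$ and $|M^{(k-1)} + (i+1)D| \leq m + (2i)^{i+1} c^i t$ for every $i \leq k-1$. Apply Theorem~\ref{thm:PRGEN} to $M^{(k-1)}$ (in the role of the theorem's ``$M$'') with $N = \emptyset$, $j = k$, $h = k+1$ (so $\gamma = (k+1)/k$ and $\gamma - 1 = 1/k$), and threshold $\ell := |M^{(k-1)}| - t/c$. The theorem's parameter $s = |M^{(k-1)} + kD|$ is bounded by $m + (2(k-1))^k c^{k-1} t$ by the inductive hypothesis. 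The theorem produces $M^{(k)} \subseteq M^{(k-1)}$ with $|M^{(k)}| > \ell$, so that $|M^{(k-1)} \setminus M^{(k)}| < t/c$ and hence $|M \setminus M^{(k)}| \leq k t/c$, as required. The bounds for $i < k$ are inherited from $M^{(k-1)}$ via the containment $M^{(k)} + (i+1)D \subseteq M^{(k-1)} + (i+1)D$.

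To close the induction at $i = k$, substitute $|M^{(k-1)}| - \ell = t/c$, $|M^{(k)}| - \ell \leq t/c$, and $\gamma - 1 = 1/k$ into the conclusion of Theorem~\ref{thm:PRGEN}. After simplification, the first term becomes $\tfrac{s^\gamma}{\gamma}(c/t)^{1/k}$ (dropping the negative $1/|M^{(k-1)}|^{1/k}$ contribution) and the second becomes $(sc/t)^{\gamma}(t/c) = s^{1+1/k}(c/t)^{1/k}$, giving
\[
|M^{(k)} + (k+1)D| \leq \left(1 + \tfrac{1}{\gamma}\right) s^{1+1/k} \left(\tfrac{c}{t}\right)^{1/k} \leq 2\, s^{1+1/k} \left(\tfrac{c}{t}\right)^{1/k}.
\]
The main obstacle is the numerical verification that this is at most $m + (2k)^{k+1} c^k t$. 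Writing $T_{k-1} := (2(k-1))^k c^{k-1} t$, so $s \leq m + T_{k-1}$, I would use the subadditivity $(a+b)^{1/k} \leq a^{1/k} + b^{1/k}$ (valid since $1/k \leq 1$) to expand $s \cdot s^{1/k} \leq (m + T_{k-1})(m^{1/k} + T_{k-1}^{1/k})$ into four cross-terms, and argue that each is absorbed either by the main $m$ on the right-hand side or by the generous excess $T_k = (2k)^{k+1} c^k t$. The exponent $(2k)^{k+1}$ is chosen precisely to outpace the factor of roughly $2c^{1/k}$ introduced per iteration, and the hypothesis $c \geq 4$ supplies the slack needed to absorb the lower-order contributions. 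The bookkeeping, though routine, is where I would expect the bulk of the technical work to lie.
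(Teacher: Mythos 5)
Your application of Theorem~\ref{thm:PRGEN} with $N = \emptyset$ has a fatal flaw that cannot be fixed by bookkeeping. With $N = \emptyset$ the quantity $s$ becomes $|M^{(k-1)} + kD|$, which is at least $|M^{(k-1)}| \approx m$ and, by your inductive hypothesis, at most $m + (2(k-1))^k c^{k-1} t$. When $m \gg t$ (which is the main regime of interest: in the lemma's application $m = a$ can be as large as $n/2$ while $t$ is comparable to $d$), we have $s \approx m$, so $s^{\gamma}(c/t)^{1/k} \approx m \cdot (mc/t)^{1/k}$, which grows super-linearly in $m$ and cannot be bounded by $m + (2k)^{k+1}c^k t$. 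Concretely, already at $k=1$ with $c = 4, t = 1$, your bound produces something of order $m^2$ on the left while the target on the right is $m + O(1)$. The subadditivity expansion $(m + T)^{1/k} \leq m^{1/k} + T^{1/k}$ does not save you, because the offending cross-term $m \cdot m^{1/k}$ survives and dominates.

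The step you are missing is the choice of the auxiliary set $N$ in Theorem~\ref{thm:PRGEN}. The paper takes $N = M^{(k-1)} + \{e\}$ for a single element $e \in D$. Since $e \in D$, the set $N + (k-1)D = M^{(k-1)} + \{e\} + (k-1)D$ sits inside $M^{(k-1)} + kD$, and since translation by $e$ is a bijection, $|N + (k-1)D| = |M^{(k-1)} + (k-1)D| \geq |M^{(k-1)}|$. Consequently
\[
s = \bigl|(M^{(k-1)} + kD) \setminus (N + (k-1)D)\bigr| \leq |M^{(k-1)} + kD| - |M^{(k-1)}| \leq (2(k-1))^k c^{k-1} t,
\]
which has \emph{no} $m$ term. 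The whole point of Theorem~\ref{thm:PRGEN}'s relative formulation (measuring $M + jD$ against $N + (j-1)D$) is to allow exactly this cancellation; choosing $N = \emptyset$ discards that mechanism and forces you to raise an $m$-sized quantity to the power $\gamma > 1$. Once $s = O_k(c^{k-1}t)$, the output of the theorem is of the same order, and one recovers $|M^{(k)} + (k+1)D|$ by adding back $|N + kD| = |M^{(k-1)} + kD| \leq m + (2(k-1))^k c^{k-1}t$, which is where the single additive $m$ correctly reappears. The rest of the induction (tracking $|M^{(i-1)} \setminus M^{(i)}| \leq t/c$ and summing the drops) is as you outlined.
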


\begin{proof}
We prove this by induction on $k$. 

Set  $N = M + \{e\}$ for some $e \in D$, $j=1$, $h = 2$, $s=t$, and $\ell = m - \frac{t}{c}$. Since $M+D \supseteq N$, and $|N| = |M|$, we have that $|(M+D)\setminus N| = t$. Theorem~\ref{thm:PRGEN} guarantees the existence of a set $M^{(1)} \subseteq M$ such that $|M \setminus M'| \leq \frac{t}{c}$ and 

\begin{align*}
|(M^{(1)} + 2D) \setminus (N+D)| & \leq \frac{t^2}{2}\left(\frac{c}{t} - \frac{1}{m}\right) + c^2\left(m - \left(m - \frac{t}{c}\right)\right)\\
& \leq 2c \cdot t.
\end{align*}
Therefore, $|M^{(1)} + 2D| \leq |(M^{(1)} + 2D) \setminus (N+D)|  + |N+D| \leq m + t+ 2c t \leq m+ 3ct$ which completes the base case of the induction.

For $i \geq 2$, let us assume that there is an $M^{(i)} \subseteq M^{(i - 1)} \cdots M^{(1)} \subset M$ such that 
\begin{itemize}
\item $|M^{(i'-1)} \setminus M^{(i')}| \leq \frac{t}{c}$ 
\item $|M^{(i')} + (i'+1)D|\leq  |M^{(i')}| + (2i')^{i'+1} \cdot c^{i'} t $
\end{itemize}

For each $i' \leq i$. Set $N = M^{(i)} + \{e\}$ for some $e \in D$. Since $M^{(i)} + (i+1)D \supseteq N+ iD$, we have 

\begin{align*}
|(M^{(i)} + (i+1)D) \setminus (N + iD)| & = |M^{(i)} + (i+1)D| - |N+ iD| \\
& = |M^{(i)} + (i+1)D| - |M^{(i)} + iD| \\
& \leq |M^{(i)} + (i+1)D| - |M^{(i)}| \\
& \leq (2i)^{i+1} \cdot c^{i} t .
\end{align*}

Now apply Theorem~\ref{thm:PRGEN} with $M=M^{(i)}$, $N = M^{(i)} + \{e\}$ for some $e \in D$, $j = i+1$, $h = i+2$, and $\ell = |M^{(i)}| - \frac{t}{c}$, $s:=|(M^{i} + (i+1)D) \setminus (N + iD)|\leq  (2i)^{i+1} \cdot c^i t$.
So again, we obtain a set $M^{(i+1)} \subseteq M^{(i)}$ of size at least $\ell \geq |M^{(i)}| - \frac{t}{c}$ and

\begin{align*}
&|(M^{(i+1)} + (i+2)D) \setminus (N + (i+1)D)| \\
& \leq \frac{((2i)^{i+1} \cdot c^i t)^{\frac{i+2}{i+1}}}{(i+2)/(i+1)}\left(\left(\frac{c}{t}\right)^{\frac{1}{i+1}} - \left(\frac{1}{m}\right)^{\frac{1}{i+1}}\right) + (2i)^{i+2}\cdot c^{i+2}\left(|M^{(i)}| - \left(|M^{(i)}| - \frac{t}{c}\right)\right)\\
& \leq  2 \cdot (2i)^{i+2} \cdot c^{i+1} t,  \\
\end{align*}

and therefore,
\begin{align*}
|(M^{(i+1)} + (i+2)D)| & \leq |N + (i+1)D| + 2 \cdot (2i)^{i+2} \cdot c^{i+1} t \\
& = |M^{(i)} + (i+1)D| +  2 \cdot (2i)^{i+2} \cdot c^{i+1} t  \\
& \leq |M^{(i)}| + (2i)^{i+1}c^{i} t + 2 \cdot (2i)^{i+2} \cdot c^{i+1} t \numberthis \label{eqn:ind2}\\
& \leq |M^{(i+1)}| + \frac{t}{c} +  (2i)^{i+1}c^{i} t + 2 \cdot (2i)^{i+2} \cdot c^{i+1} t \numberthis \label{eqn:ind1}\\
& \leq  |M^{(i+1)}| + (2(i+1))^{i+2}c^{i+1}t.
\end{align*}

Here, (\ref{eqn:ind2}) follows from the induction hypothesis, and (\ref{eqn:ind1}) follows from $|M^{i+1} \setminus M^{(i)}|\leq \frac{t}{c}$. Thus we have  $M^{(k)} \subseteq \cdots \subseteq M^{(1)} \subseteq M$ and for every $i \leq k$, we have $|M^{(i-1)} \setminus M^{(i)}| \leq \frac{t}{c}$ and 
\begin{align*}
|M^{(k)} + (i+1)D| & \leq |M^{(i)}+(i+1)D|  \\
&  \leq |M^{(i)}| + (2i)^{i+1} \cdot c^i t \\
& \leq m + (2i)^{i+1}\cdot c^i t
\end{align*} 
for each $i \leq k$ as claimed.
\end{proof}

Next, as a corollary of Theorem~\ref{thm:PR} and Theorem~\ref{thm:CDGEN}, we have the following:

\begin{corollary}
\label{corr:basicexpansion}
Let $M \subseteq X$ such that $|M| \leq |X|/2$ and $|M+D| = \alpha|M|$, and $M + 2D \neq X$. Then $|2D| \leq 2(\alpha^2 - 1)|M|$.
\end{corollary}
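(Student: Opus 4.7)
The plan is to combine the Pl\"unnecke--Ruzsa--Petridis inequality with Corollary~\ref{thm:CDGEN}, applied to the sumset $2D$ rather than $D$ itself. First, apply Theorem~\ref{thm:PR} with $j=2$ to the hypothesis $|M+D|=\alpha|M|$ to obtain a non-empty subset $M'\subseteq M$ with $|M'+2D|\le \alpha^2 |M'|$.

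Next, apply Corollary~\ref{thm:CDGEN} to the pair $(M',\,2D)$. This yields two cases: either $|M'+4D|=|M'+2D|$, in which case the set $M'+2D$ is invariant under translation by $2D$; or else $|M'+2D|\ge |M'|+|2D|/2$. In the second case, combining with the Pl\"unnecke bound gives
\[
|M'|+|2D|/2 \;\leq\; |M'+2D| \;\leq\; \alpha^2|M'|,
\]
so $|2D|\le 2(\alpha^2-1)|M'|\le 2(\alpha^2-1)|M|$, as desired. Hence the remaining task is to rule out the first case, which is where the hypothesis $M+2D\ne X$ enters.

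If $M'+2D$ is $(+2D)$-invariant, then it is invariant under the whole subgroup $\langle 2D\rangle\le\mathcal{F}$. The key structural point is that $\langle 2D\rangle = X$: since $\Gamma$ is bipartite, $X$ is the index-two subgroup of $\mathcal{F}$ containing $0$ with $D\subseteq Y=\mathcal{F}\setminus X$, so $2D\subseteq X$ and $\langle 2D\rangle\subseteq X$; conversely, connectedness of $\Gamma$ means $\langle D\rangle=\mathcal{F}$, and because $D\subseteq Y$, any element of $X$ can be written as a sum of an even number of elements of $D$, placing it in $\langle 2D\rangle$. Therefore $M'+2D$ is a union of $X$-cosets contained in $X$, which forces $M'+2D=X$. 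But $M'\subseteq M$ and $M+2D\subsetneq X$ by hypothesis, a contradiction. The only mild obstacle is this identification $\langle 2D\rangle = X$, which combines the bipartite structure with connectedness; once it is in place, the Pl\"unnecke upper bound and the Corollary~\ref{thm:CDGEN} lower bound on $|M'+2D|$ fit together to give the stated inequality.
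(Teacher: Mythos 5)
Your proof is correct and follows essentially the same route as the paper: apply Plünnecke--Ruzsa--Petridis to get $M'$ with $|M'+2D|\le\alpha^2|M'|$, then apply Corollary~\ref{thm:CDGEN} to the pair $(M',2D)$ and use connectedness together with $M+2D\ne X$ to rule out the degenerate alternative. The one small stylistic difference is that you rule out the degenerate case via the group-theoretic identity $\langle 2D\rangle=X$, whereas the paper appeals to the bipartite expansion fact $|N(S)|>|S|$ for proper $S\subsetneq X$ applied to $S=M'+2D$; these are equivalent consequences of connectedness and your version is, if anything, spelled out more carefully.
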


\begin{proof}
A direct application of Theorem~\ref{thm:PR} gives us an $M' \subseteq M$ that 
\begin{equation}
\label{eqn:PR}
|M'+2D| \leq \alpha^2|M'|.
\end{equation}
 Since $\Gamma$ is connected, $D$ is a generating set, and so we must have that $|M' + 4D| > |M' + 2D|$. Indeed, suppose otherwise, then it must be the case that $|M' + 2D| = |M' + 3D| = |M' + 4D|$, since for any two sets $A,B \in \mathcal{F}$, $|A+2B| \geq |A + B|$. But since $M' + 3D = N_{\Gamma}(M' + 2D)$, this gives us that $M' + 2D = X$ or $\Gamma$ is disconnected, which is a contradiction.  So Theorem~\ref{thm:CDGEN} gives us that 
 \begin{equation}
 \label{eqn:olsen}|M' + 2D| \geq |M'| + (1/2) \cdot |2D|.
 \end{equation}
 
 Combining~(\ref{eqn:PR}) and~(\ref{eqn:olsen}) gives us that $|2D| \leq 2(\alpha^2 - 1)|M'| \leq 2(\alpha^2 - 1)|M|$.
\end{proof}

We also need the following easy observation.

\begin{proposition}
\label{claim:triv}
Let $D'\subseteq D$ such that $|D \setminus D'|\leq \sqrt{d}/ \log d$. Then $|D+D'| \geq (|2D|)(1 - \left(1/\log^2  d\right))$.
\end{proposition}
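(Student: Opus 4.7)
The plan is to write $|D + D'|$ as $|2D| - |2D \setminus (D + D')|$ and show the error term $|2D \setminus (D + D')|$ is small by counting representations. The trivial containment $D + D' \subseteq D + D = 2D$ is the starting point, so the whole question reduces to bounding how many sums in $2D$ are lost when one summand is restricted to the smaller set $D'$.

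The key observation I would make is that every $x \in 2D \setminus (D + D')$ has \emph{all} of its representations as a sum of two elements of $D$ concentrated on the tiny set $D \setminus D'$. Indeed, if $x = d_1 + d_2$ with $d_i \in D$ and either $d_1 \in D'$ or $d_2 \in D'$, then (using commutativity) $x \in D + D'$, contradicting the choice of $x$. Therefore $x \in (D \setminus D') + (D \setminus D')$, which immediately yields
\[
|2D \setminus (D + D')| \;\leq\; |(D \setminus D') + (D \setminus D')| \;\leq\; |D \setminus D'|^{2} \;\leq\; \frac{d}{\log^{2} d}.
\]

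To finish, I would invoke the hypothesis $|2D| \geq d \log^{3} d$ from the setting of Lemma~\ref{lem:main} (condition (3)). This gives
\[
\frac{|D + D'|}{|2D|} \;\geq\; 1 - \frac{|2D \setminus (D + D')|}{|2D|} \;\geq\; 1 - \frac{d / \log^{2} d}{d \log^{3} d} \;=\; 1 - \frac{1}{\log^{5} d},
\]
which is in fact a bit stronger than the stated $1 - 1/\log^{2} d$; the weaker form in the statement is clearly enough for later use.

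There is no real obstacle here beyond spotting the representation argument; the whole proof is essentially one line once one notices that the ``lost'' elements of $2D$ must arise entirely from $D \setminus D'$. I would not rely on any of the Plünnecke--Ruzsa--Petridis or Olson machinery for this statement, since the naïve bound $|D \setminus D'|^{2}$ combined with the lower bound on $|2D|$ already beats the required ratio comfortably.
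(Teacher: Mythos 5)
Your proof is correct and uses the same core idea as the paper's: decompose $2D$ as $(D+D') \cup \bigl((D\setminus D')+(D\setminus D')\bigr)$ and bound the second piece by $|D\setminus D'|^2 \leq d/\log^2 d$. The only difference is that you invoke condition (3) ($|2D|\geq d\log^3 d$) at the end, which is not actually needed: the trivial fact $|2D|\geq |D|=d$ already gives $d/\log^2 d \leq |2D|/\log^2 d$ and hence the stated bound; your extra hypothesis merely upgrades $1-1/\log^2 d$ to $1-1/\log^5 d$.
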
 

\begin{proof}
We have that $2D = (D + D') \cup (2 \cdot(D \setminus D'))$. Since $|2 \cdot(D \setminus D')| \leq |D \setminus D'|^2 \leq |D|/\log^2 |D|$, the claim follows.
\end{proof}

Corollary~\ref{corr:basicexpansion} also gives us the following, which is the only place we use the fact that $|D| \leq n^{1/3}$.

\begin{corollary}
\label{corr:2nbhd}
Let $D' \subset D$ such that $|D'| \geq d- \sqrt{d}/\log d$, and let $M \supseteq \{u\} + D'$ and $|M| \leq |X|/2$ for some $u \in \mathcal{F}$. Then the following holds: 
\[|M+D| \geq |M| + |2D|/6.\]
\end{corollary}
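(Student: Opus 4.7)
The plan is to argue by contradiction. Assume $t := |M+D| - |M| < |2D|/6$ and split into two regimes based on the size of $|M|$.

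In the regime $|M| \leq (5/6 - 1/\log^2 d)|2D|$, since $M \supseteq \{u\} + D'$, Proposition~\ref{claim:triv} gives $|M+D| \geq |D + D'| \geq (1 - 1/\log^2 d)|2D|$, and subtracting the assumed upper bound on $|M|$ yields $t \geq |2D|/6$, contradicting our assumption.

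In the remaining regime $|M| > (5/6 - 1/\log^2 d)|2D|$, the assumption $t < |2D|/6$ forces $t/|M| < 1/(5 - o(1))$. The strategy is to apply Corollary~\ref{corr:basicexpansion} to a large subset $M^{(1)} \subseteq M$ obtained from Lemma~\ref{lem:PRUSE2} with $k = i = 1$ and $c = 4$: this gives $|M \setminus M^{(1)}| \leq t/4$ and $|M^{(1)} + 2D| \leq |M| + 16t$. The bound $|D| \leq n^{1/3}$ is crucial here: it yields $|2D| \leq n^{2/3}$, and combined with $t < |2D|/6$ gives $|M^{(1)} + 2D| \leq n/2 + (8/3)n^{2/3} < n$ for $n$ large, so $M^{(1)} + 2D \neq X$ and Corollary~\ref{corr:basicexpansion} applies. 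Writing $t' := |M^{(1)} + D| - |M^{(1)}|$, the resulting inequality rearranges as $|2D| \leq 2((\alpha')^2-1)|M^{(1)}| = 4t' + 2(t')^2/|M^{(1)}|$. Using $|M^{(1)} + D| \leq |M| + t$ and $|M^{(1)}| \geq |M| - t/4$ bounds $t' \leq 5t/4$, giving $|2D| \leq 5t + 25t^2/(8(|M|-t/4))$; since $t/|M| < 1/5$, a short calculation shows the second term is bounded by $(25/38 + o(1))t < t$, yielding $|2D| < 6t$ and contradicting $t < |2D|/6$.

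The main obstacle is exactly as the paper indicates: ensuring the hypothesis $M^{(1)} + 2D \neq X$ for some subset $M^{(1)}$ to which Corollary~\ref{corr:basicexpansion} can be applied. This is precisely where $|D| \leq n^{1/3}$ enters, since without a sufficiently small $|2D|$ the sumset $M + 2D$ could in principle equal all of $X$, preventing any use of Corollary~\ref{corr:basicexpansion}.
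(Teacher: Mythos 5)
Your proof is correct, but it takes a genuinely different route from the paper's in the second regime. The paper applies Corollary~\ref{corr:basicexpansion} \emph{directly} to $M$ (not to a subset): it checks $M+2D\neq X$ via the bound $|M+2D|<|M|+(d+1)t$, which with $t<|2D|/6\le d^2/6$ and $|D|\le n^{1/3}$ gives $|M+2D|<|X|/2+|X|/6<|X|$. It then needs only $(1+x)^2<1+3x$ for $x=|2D|/(6|M|)\in(0,1)$ --- which holds throughout the regime $|M|>|2D|/6$ --- to land on the contradiction $|2D|<|2D|$ in one line. Your version instead passes to a subset $M^{(1)}$ via Lemma~\ref{lem:PRUSE2} to certify $M^{(1)}+2D\neq X$, and then must track the degradation from $M$ to $M^{(1)}$ (the quantities $t'$, $|M^{(1)}|$, the ratio $t/|M|$), which is why you needed to raise the case-split threshold from $|2D|/6$ to roughly $(5/6)|2D|$. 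This is more bookkeeping for the same conclusion, though it has the merit of not leaning on the sharper version of Fact~\ref{thm:dvs2d} for $i=2$ --- the Fact as literally stated gives $|M+2D|\le m+d^2t$, which is too weak here, whereas the easy refinement $|M+2D|\le m+(d+1)t$ (used implicitly in the paper's display) is what actually makes the direct argument go through; your use of Lemma~\ref{lem:PRUSE2} sidesteps that entirely. Both routes correctly identify that $|D|\le n^{1/3}$ enters exactly to guarantee $M+2D$ (or $M^{(1)}+2D$) is not all of $X$.
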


\begin{proof}
The statement is clearly true for $|M| \leq (1/6) \cdot |2D|$ because of Proposition~\ref{claim:triv}, since $|M+D| \geq |D'+D| \geq |2D|(1 - 1/(\log^2|D|)) \geq |M| + |2D|/6$.

For $|M| > (1/6)\cdot |2D|$, suppose we had $|M + D| < |M| + |2D|/6$. Then Fact~\ref{thm:dvs2d} gives us that 

\begin{align*}
|M + 2D| & < |M| + |D| \cdot |2D|/6 \\
& \leq |M| + |D|^3/6 \\
& \leq |X|/2 + |X|/6 \\
&< |X|.
\end{align*}

This is the only place we use $|D| \leq n^{1/3}$. Since $M$ satisfies the hypothesis of Corollary~\ref{corr:basicexpansion}, we have that \

\begin{align*}
|2D| & \leq 2\left(\left(1 + \frac{|2D|}{6|M|}\right)^2 - 1\right)|M| \\
& < 2\left(3 \cdot \frac{|2D|}{6|M|}\right)|M| \\
& =  |2D|
\end{align*}
which is a contradiction. The second inequality is because $(1 + x)^2 < 1 + 3x$ for $x \in (0,1)$.
\end{proof}

\subsection{Boundaries: Proof of Lemma~\ref{lem:bdry}}
\label{subsec:bdry}

Applying Lemma~\ref{lem:PRUSE2} setting $M = [A]$ and $c = \log^2 d$, there is an $\underline{A} \subseteq [A]$ such that 
\begin{itemize}
\item[1.] $|[A] \setminus \underline{A}| \leq 4\frac{t}{\log^2 d}$
\item[2.] $|N^i(\underline{A})| \leq a + O(t\log^{2(i-1)} d)$ for each $i \in [4]$.
\end{itemize}

Moreover, we may assume that $\underline{A} = [\underline{A}]$. Suppose not, replacing $\underline{A}$ by $[\underline{A}]$ (which is possible because $[A]$ is closed and so, $[\underline{A}] \subseteq [A]$) does not violate either of the properties. Define $\underline{G} := N(\underline{A})$ and $\underline{G}' := \{u \in N(\underline{A})~|~N(u) \cap \underline{A}^c \neq \emptyset\}$ to be the boundary of $\underline{G}$. Observe that 

\begin{equation}
\label{eqn:split}
G' \subseteq \underline{G}' \cup N([A] \setminus \underline{A}).
\end{equation}

Before we proceed, let us make a few definitions. Define $G_0 := N^3(\underline{A}) \setminus \underline{G}$, $A_0 := N^2(\underline{A}) \setminus \underline{A}$, and $A_1 := N^4(\underline{A}) \setminus N^2(\underline{A})$. Lemma~\ref{lem:PRUSE2} implies that $|A_0| = O(t \log^2 d)$, $|G_0| = O(\log^4 d)$, and $|A_1| = O(\log^{6} d)$.

We have $N(G_0) \subseteq A_0 \cup A_1$. So, by Theorem~\ref{thm:cover}, there is a set $Z_1 \subset A_0 \cup A_1$ such that

\[
|Z_1| \leq O\left(\frac{|A_0 \cup A_1|\log d}{d}\right) = O\left(\frac{t \log^{7} d}{d}\right)
\]

and 

\begin{equation}
\label{eqn:Z}
G_0 \subseteq N(Z_1).
\end{equation}

Let $\underline{G}' = G_{L} \sqcup G_{S}$, where 
\[
G_{S} := \{v \in \underline{G}'~|~ d_{A_0}(v) \geq  d/2\}.
\]

Since we have $|A_0| = O(t \log^2 d)$, by Theorem~\ref{thm:cover}, there is a subset $Z_2 \subset A_0$ of size at most $O\left(\frac{t\log^3 d}{d}\right)$ such that

\begin{equation}
\label{eqn:G_S_b}
G_{S} \subseteq N(Z_2).
\end{equation}

Applying Lemma~\ref{lem:PRUSE2} with $M = \underline{G}^c$ gives that there is a subset $M' \subseteq \underline{G}^c$ such that $|M'| \geq |\underline{G}^c| - \frac{2t}{\log^3 d}$ and $|M' + 3D| \leq |\underline{G}^c| + O(t\log^6 d)$. Set $A_2 := |M'+3D| \cap \underline{A}$. We have that $|A_2| = O(t \log^6 d)$.

Let $G'' := G_L \cap N^2(M')$. We have that each vertex in $G''$ must have at least $d/2$ neighbors in $A_2$. So, by Theorem~\ref{thm:cover}, there is a subset $Z_3 \subset A_2$ of size at most $\frac{|A_2| \log d}{d} = O\left(\frac{t \log^7 d}{d}\right)$ such that

\begin{equation}
\label{eqn:G''_b}
G'' \subseteq N(Z_3).
\end{equation}

Since $\underline{A}$ is closed, we have $\underline{A}^c = N(\underline{G}^c)$. So, $G_L \subseteq N^2\left(\underline{G}^c\right)$ and therefore, 
\begin{equation}
\label{eqn:G_L-G''_1}
G_L \setminus G'' \subseteq N^2(\underline{G}^c \setminus M'). 
\end{equation}

Moreover, every $u \in G_L \setminus G''$ satisfies $(N^2(u) \cap \underline{G}^c) \subseteq N(A_0) = G_0$, and therefore,

\begin{equation}
\label{eqn:G_L-G''_2}
N^2(G_L\setminus G'') \cap \underline{G}^c \subseteq G_0.
\end{equation}

Taking (\ref{eqn:G_L-G''_1}) and (\ref{eqn:G_L-G''_2}) together, we have that

\begin{equation}
\label{eqn:G'_L-G''_b}
G_{L} \setminus G'' \subseteq N^2\left(\left(\underline{G}^c \setminus M'\right) \cap G_0\right) .
\end{equation}

Thus, since we have

\[
\underline{G}' = G_S \sqcup G'' \sqcup (G_L \setminus G''),
\]

we have, using (\ref{eqn:G_S_b}),  (\ref{eqn:G''_b}), and (\ref{eqn:G'_L-G''_b}),

\begin{equation}
\label{eqn:underlineG'}
\underline{G}' \subseteq N(Z_2) \cup  N(Z_3) \cup N^2\left(\left(\underline{G}^c \setminus M' \right)\cap G_0\right). 
\end{equation}

For $([A] \setminus \underline{A})$, we have the following:

\begin{claim}
\label{claim:underline}
The number of possibilities for $[A]\setminus \underline{A}$'s for a given $Z_1$ is at most $2^{O\left(\frac{t }{\log d}\right)}$.
\end{claim}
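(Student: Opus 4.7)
The plan is to enumerate $A^* := [A] \setminus \underline{A}$ via tree-counting in $\Gamma^2$, appealing to Proposition~\ref{prop:trees}. Two key facts set up the argument. First, the choice of $\underline{A}$ through Lemma~\ref{lem:PRUSE2} yields $|A^*| \leq s := 4t/\log^2 d$ and $|A_0| = |N^2(\underline{A}) \setminus \underline{A}| = O(t \log^2 d)$. Second, $[A]$ is 2-linked in $\Gamma^2$: any $u \in [A]$ has $N(u) \subseteq N(A)$, so it shares a neighbor with some $a \in A$ and is therefore $\Gamma^2$-adjacent to $a$, which shows that $[A]$ inherits 2-linkedness from $A$.

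Next, I decompose $A^*$ into its 2-linked components $C_1, \ldots, C_p$ in $\Gamma^2$ (with $p \leq s$). Since $[A] = \underline{A} \sqcup A^*$ is 2-linked in $\Gamma^2$, any $\Gamma^2$-path inside $[A]$ from $C_i$ to $\underline{A}$ exits $A^*$ at a vertex $r_i \in C_i$ that is $\Gamma^2$-adjacent to $\underline{A}$, hence $r_i \in A_0$. Because the maximum degree of $\Gamma^2$ is at most $d_2$, Proposition~\ref{prop:trees} implies that, for a fixed $r_i$, the number of 2-linked subsets of $\Gamma^2$ of size $|C_i|$ containing $r_i$ is at most $(ed_2)^{|C_i|}$. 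Encoding each $A^*$ by a choice of $p$, a set of roots $\{r_1, \ldots, r_p\} \subseteq A_0$, a composition $(|C_1|, \ldots, |C_p|)$ of $|A^*|$, and the components $C_i$ themselves, the total count is bounded by
\begin{equation*}
\sum_{p=1}^{s} \binom{|A_0|}{p} \binom{s-1}{p-1} (ed_2)^{s} \;\leq\; \binom{|A_0|+s}{s} (ed_2)^{s},
\end{equation*}
where the second inequality uses $\binom{s-1}{p-1} \leq \binom{s}{p}$ and Vandermonde's identity $\sum_p \binom{|A_0|}{p}\binom{s}{p} = \binom{|A_0|+s}{s}$.

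The remaining arithmetic is routine. Since $|A_0|/s = O(\log^4 d)$, we have $\binom{|A_0|+s}{s} \leq (2e|A_0|/s)^s = 2^{O(s \log\log d)}$, while $(ed_2)^s \leq 2^{O(s \log d)}$ as $d_2 \leq d^2$. With $s = 4t/\log^2 d$, both exponents collapse to $O(t/\log d)$, giving the claimed bound $2^{O(t/\log d)}$. The main conceptual step is producing the root $r_i \in C_i \cap A_0$, which crucially uses the 2-linkedness of $[A]$; the remainder is a standard tree enumeration. The hypothesis that $Z_1$ is given is implicit here, although one could sharpen the root restriction to $A_0 \cap (Z_1 + 2D)$ via $N(r_i) \setminus \underline{G} \subseteq G_0 \subseteq N(Z_1)$, should a tighter bound ever be required.
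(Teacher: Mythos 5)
Your plan follows the same strategy as the paper: decompose $A^* = [A]\setminus\underline{A}$ into its $2$-linked components, produce a root in each component via the $2$-linkedness of $[A]$, and then encode $A^*$ by the roots together with tree data bounded via Proposition~\ref{prop:trees}. The arithmetic at the end is also the same in spirit. However, there is a genuine gap in the root-counting step.

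You count root sets by $\binom{|A_0|}{p}$. This is only a valid encoding count if $A_0$ is a \emph{fixed} set, determined by the data you are conditioning on (namely $Z_1$). But $A_0 = N^2(\underline{A}) \setminus \underline{A}$ depends on $\underline{A}$, which in turn depends on the unknown $A$; different $A$'s consistent with the same $Z_1$ will generally produce different $A_0$'s. So your encoding does not map into a data space of size $\sum_p \binom{|A_0|}{p}\binom{s-1}{p-1}(ed_2)^s$: the ``universe'' from which the roots are drawn is itself unknown. The paper's fix is precisely what you relegate to a footnote: one observes $A_0 \subseteq N(G_0) \subseteq N^2(Z_1)$, so that the roots lie in $N^2(Z_1)$, a set determined by $Z_1$ with $|N^2(Z_1)| \leq d^2|Z_1| = O(td\log^7 d)$. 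Replacing $\binom{|A_0|}{p}$ by $\binom{|N^2(Z_1)|}{p}$ keeps the final bound: $\binom{O(td\log^7 d)}{\leq s}$ with $s = 4t/\log^2 d$ is still $2^{O(t/\log d)}$. Your closing remark treats this restriction as an optional ``sharpening'' — this has the logic backwards. Restricting the roots to $N^2(Z_1)$ is not a tightening (the universe is in fact larger than $|A_0|$); it is what makes the enumeration well-defined given only $Z_1$. Once you make that substitution, the proof matches the paper's.
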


\begin{proof}
Since $[A]$ is $2$-linked, we must have that every $2$-linked component in $[A] \setminus \underline{A}$ has at least one vertex in with $N^2(\underline{A}) \setminus \underline{A}$. Since we have that
\[
N^2(\underline{A}) \setminus \underline{A} = A_0 \subseteq N(G_0) \subseteq N^2(Z_1),
\] 
we can choose $[A]\setminus \underline{A}$ from a given $Z_1$ by the following procedure: (1) Choose one vertex per $2$-linked component of $[A] \setminus \underline{A}$  from $N^2(Z_1)$, (2) specify the sizes of these $2$-linked components and finally, (3) specify the vertices in each of these components by specifying the BFS tree starting from the chosen vertices in some predetermined order. 

The first can be done in $\binom{|N^2(Z_1)|}{\leq \frac{4t}{\log^2 d}}$ ways, the second in $2^{\frac{8t}{\log^2 d}}$ ways, and the third, using Proposition~\ref{prop:trees}, in $d^{\frac{8t}{\log^2 d}}$ ways.

Since $|Z_1| = O\left(\frac{t \log^{7} d}{d}\right)$, we have that $|N^2(Z_1)| \leq d^2|Z_1| = O(td \log^{7} d)$. Therefore, the total number of choices for $[A]\setminus \underline{A}$ is at most $2^{O\left(\frac{t}{\log d}\right)}$.
\end{proof} 

Recalling (\ref{eqn:split}) and (\ref{eqn:underlineG'}), we have that

\[
G' \subseteq N(Z_2) \cup  N(Z_3) \cup N^2\left(\left(\underline{G}^c \setminus M' \right)\cap G_0\right) \cup N([A] \setminus \underline{A})
\]

The size of each possible $([A] \setminus \underline{A})$ described by Claim~\ref{claim:underline} is at most $\frac{4t}{\log^2 d}$. Each of the sets $Z_2$, and $Z_3$ are of size at most $O\left(\frac{t \log^7 d}{d}\right)$. Finally, $(\underline{G}^c \setminus M') \cap G_0$ is a set of size at most $|\underline{G}^c \setminus M'| \leq \frac{t}{\log^4 d}$. Putting these together, we have
\begin{align*}
& |N(Z_2) \cup N^2((\underline{G}^c \setminus M') \cap G_0) \cup N(Z_3) \cup N([A] \setminus \underline{A})|  \\
& \leq  d|Z_2| + d|Z_3| + d_2|\underline{G}^c \setminus M'| + d|[A] \setminus \underline{A}|\\
& = O\left(\frac{td_2}{\log^3 d}\right).
\end{align*}

To count the number of possibilities for this, each tuple $\left(Z_2,Z_3,(\underline{G}^c \setminus M'),([A] \setminus \underline{A})\right)$ is described as follows:

\begin{itemize}
\item The sets $Z_2$, and $Z_3$ are specified explicitly by sets of size $O\left(\frac{t \log^7 d}{d}\right)$ each. This gives at most \[
\binom{n}{O\left(\frac{t \log^7 d}{d}\right)}^2
\]
possibile descriptions. 
\item The set $(\underline{G}^c \setminus M') \cap G_0$ is specified by 
\begin{itemize}
\item Specifying $Z_1$, which is a set of size $O\left(\frac{t \log^{7} d}{d}\right)$. This has 
\[
\binom{n}{O\left(\frac{t \log^{7} d}{d}\right)}
\]
possible descriptions.
\item Specifying the subset of $N(Z_1)$ of the size at most $|\underline{G}^c \setminus M'| \leq \frac{2t}{\log^2 d}$. This has at most $\binom{|N(Z_1)|}{2t/\log^2 d} = 2^{O\left(\frac{t}{\log d}\right)}$ possible descriptions.
\end{itemize}
\item Specifying $[A] \setminus \underline{A}$ as in Claim~\ref{claim:underline} using $Z_1$, which has at most $2^{O\left(\frac{t}{\log d}\right)}$ descriptions.
\end{itemize}

So in total, the number of possible descriptions (and therefore, the number) of tuples $\left(Z_2,Z_3,(\underline{G}^c \setminus M'),([A] \setminus \underline{A})\right)$ is at most

\[
\binom{n}{\frac{t \log^7 d}{d}}^2 \cdot \binom{n}{\frac{t \log^{7} d}{d}} \cdot 2^{O\left(\frac{t}{\log d}\right)} = 2^{O\left(\frac{t}{\log d}\right)}
\]

Where in the last equality, we have used $d/ \log^{8} d = \Omega(\log n)$.

\subsection{$\varphi$-approximation: Proof of Lemma~\ref{lem:varphi}}
\label{subsec:varphi}

We will first pre-process the graph using the following contraction algorithm given a $C \in \mathcal{C}_1$. 

\begin{itemize}
\item $R \gets X$, $B\gets \emptyset$
\item While $Y \supsetneq C$ do
\begin{itemize}
\item Let $u  \in Y \setminus C$ be arbitrary.
\item $Y \gets Y \setminus \{u\}$.
\item $X \gets X \setminus N(u) \cup \{v'\}$ where $N(v') = \cup_{v \in N(u)}N(v)$ with multiplicities.
\item $R \gets R\setminus N(u)$, $B \gets B \setminus N(u) \cup \{v'\}$
\end{itemize}
\end{itemize}

Let $\mathcal{F}'$ be the final graph after the algorithm terminates with parts $X'$ and $Y'$. The set $R$ consists of all vertices whose neighbors are all in $C$, and the set $B$ are all vertices obtained through the above mentioned contraction algorithm. Since at each step, the algorithm contracts a vertex in $Y$ with it's (current) neighborhood, every vertex in $B$ is obtained via the contraction of \emph{all} the vertices in $N(S)_d$ for some $S \subseteq X$. Thus, the set $B$ is given by $\{v_S\}$ where $S \subset X$ and $v_S$ corresponds to the subset $S \cup N(S)_d$, and 
\begin{equation}
\label{eqn:Nv_S}
N(v_S) = N(S)\setminus N(S)_d. 
\end{equation}

Before the start of the algorithm, every vertex in $Y\setminus C$ has all its neighbors either in $[A]$ or $[A]^c$. Consider the partition $B = B_{A} \sqcup B_{A^c}$ defined as $B_A := \{v_S \in B~|~S \subset [A]\}$. Similarly, partition $R = R_A \sqcup R_{A^c}$ where $R_A = R \cap [A]$. 

We have that for every set $S$, $|N(S)_0| \leq |S|$, and $S \supseteq N(u)$ for every $u \in N(S)_d$. Moreover, $A$ is small. Thus Corollary~\ref{corr:2nbhd}, and (\ref{eqn:Nv_S}) together imply that every vertex in $B_A$ has degree at least $d_2/6$.

Define $Q_0$ to be a $p = \left(\frac{60\log d}{d_2}\right)$-random subset of $Y \cap G$. The following four properties hold with probability at least $1/5$.

\begin{enumerate}
\item $|Q_0| \leq O\left(\frac{t}{\log^2 d}\right)$.
\item $\nabla(Q_0, (R_{A^c} \cup B_{A^c}))  = O\left(\frac{t}{\log^2 d}\right)$.
\item $\#\{u \in B_A~|~Q_0 \cap N(u) = \emptyset \} = O\left(\frac{t}{d^7}\right)$.
\item $|(G_{\varphi}\cap C)  \setminus N(N_{R_A \cup B_A}(Q_0))| = O\left(\frac{t}{d^8}\right)$.
\end{enumerate}

First we observe that $\mathbf{E}[|Q_0|] \leq p|C|$. Thus the probability that Property $1.$ does not hold is at most, using Markov's inequality, $1/5$. Next, we observe  
\[
\mathbf{E}[|\nabla(Q_0, (R_{A^c} \cup B_{A^c}))|] = ptd = \frac{td \log d}{d_2} \leq \frac{10t}{\log^2 d}.
\] 
Thus the probability that Property $2.$ does not hold is, again by Markov's inequality, at most $1/5$.

Define $Q_1 :=  \nabla(Q_0, (R_{A^c} \cup B_{A^c}))$ . 

For property $3.$, we use the fact that every vertex in $B_A$ has degree at least $\Omega(d_2)$. So for each $u \in B_A$, we have $\mathbb{P}(Q_0 \cap B_A = \emptyset) \leq (1-p)^{d_2/6} \leq d^{-10}$. Moreover, after the algorithm $|X'| \leq d|C| \leq td^{3}$. So we have
\[
\mathbf{E}[\#\{u \in B_A~|~ Q_0 \cap N(u) = \emptyset \}] \leq td^{-7}.
\]
Therefore, the probability that Property $3.$ does not hold is again at most $1/5$.

Define $Q_2 := \{u \in B_A~|~ Q \cap N(u) = \emptyset \}\}$.

Let us abbreviate $C' := C \cap G_{\varphi}$. Property $4.$ follows from using the fact that for every $u \in C'$, $| N(N_{R_A \cup B_A}(u))| \geq d_2/6$. Indeed, since any $u \in C'$ has at least $\varphi$ edges to $A$, and therefore to $R_A \cup B_A$. Thus we may apply Corollary~\ref{corr:2nbhd} to obtain the desired bound on $| N(N_{R_A \cup B_A}(u))|$. So for any given $u$, we have that $\mathbb{P}(Q_0 \cap  N(N_{R_A \cup B_A}(u)) = \emptyset) \leq (1 - p)^{d_2/6} \leq d^{-10}$, and so

\[
\mathbf{E}[\#\{u ~|~ N(N_{R_A \cup B_A}(u)) \cap Q_0 = \emptyset\}] \leq |C| \cdot d^{-10} \leq td^{-8}.
\]
So the probability that Property $4.$ does not hold is again at most $1/5$.

Define $Q_3 := C'\setminus  N(N_{R_A \cup B_A}(Q_0))$.

Finally, by the Union Bound, the probability that either of the properties does not hold is at most $4/5$, and so in particular, there is a choice for $Q_0$ (and therefore, for $Q_1$, $Q_2$, and $Q_3$) that satisfies all four properties.

We claim that given $Q_0$, $Q_1$, $Q_2$, and $Q_3$, one can construct a set $Z_1 \subseteq G$ such that $Z_1 \supseteq G_{\varphi}$. Indeed, since using $Q_0$ and $Q_1$, one can construct $N(N_{R_A \cup B_A}(Q_0)) \subseteq G$. So far, this is missing all the vertices in $Q_3$ and $G_d \setminus C$. We have $Q_3$ provided, and finally, by $Q_2$, and the neighbors of $Q_0$, one can determine $B_A$, and therefore, $G_d \setminus C$.

Now the only vertices in $A$ uncovered by $Z_1$ are $R_A \setminus N(Z_1)$, since by construction, $N(Z_1)\supset B_A$. Note that every vertex in $R_A \setminus N(Z_1)$ has degree $d$ to $G \setminus Z_1$. Moreover, $|G \setminus Z_1| \leq t\sqrt{d}\log d$. This is because $Z_1 \supseteq G_{\varphi}$, and each vertex in $G \setminus Z_1$ contributes at least $\sqrt{d}/ \log d$ edges to $\nabla(G,[A]^c)$, which is a set of size $t d$. Thus by Theorem~\ref{thm:cover}, can specify a further $O\left(\frac{t \log^3d}{\sqrt{d}}\right)$ vertices in $C$ such that $N(C) \supset R_A \setminus Z_1$. Let $Z_2$ denote this set of vertices. The final $\varphi$-approximation is $Z_1 \cup Z_2$. We count the number of these as follows:

\begin{enumerate}
\item The set $Q_0$, $Q_3$, and $Z_2$ are subsets of $C$, each of size at most $\frac{50t}{\log^2 d}$, so the number of choices for these sets are at most $\binom{|C|}{\frac{50t }{\log^2 d}}^3 \leq 2^{O\left(\frac{t}{\log d}\right)}$.
\item Since $|Q_2| \leq \frac{5t}{d^7}$, the number of choices for this is at most $\binom{|X'|}{\frac{5t}{d^7}} \leq \binom{td^3}{\frac{t}{d^7}}= 2^{O\left(\frac{t \log d}{d^{7}}\right)}$. 
\item Finally, the number of choices for $Q_1$ are at most $\binom{td}{|Q_1|}\leq 2^{O\left(\frac{t}{\log d}\right)}$.
\end{enumerate}

Thus for every $C \in \mathcal{C}_1$, there is a set of at most $2^{O\left(\frac{t}{\log d}\right)}$ many $\varphi$-approximations for all sets $A$ such that $C$ contains the boundary of $A$.

\subsection{$\psi$-approximation: Proof of Lemma~\ref{lem:psi}} 
\label{subsec:psi}
Recall that $\psi = \frac{d}{\log d}$ and $\varphi = d - \frac{\sqrt{d}}{\log d}$. Fix an order $\ll$ on $X \cup Y$ and do the following procedure
\begin{itemize}
\item Initialize $F' \gets F$.
\item While $\mathcal{Q} := \{u \in [A]~|~d_{G \setminus F'}(u) \geq \psi\} \neq \emptyset$ do
\begin{itemize} 
\item Let $u \in \mathcal{Q}$ be smallest w.r.t. $\ll$
\item $F' \gets F' \cup N(u)$.
\end{itemize}
\item Initialize $F'' \gets F'$, $S \gets \{u \in X~|~d_{F''}(u) \geq d - \psi\}$
\item While $\mathcal{Q}' := \{w \in Y~|~d_{S''}(w) > \psi\} \neq \emptyset$ do
\begin{itemize}
\item $w \in \mathcal{Q}'$ be smallest w.r.t. $\ll$
\item $S'' \gets S'' \setminus N(w)$.
\end{itemize}
\item return $(S'',F'')$.
\end{itemize}

The fact that $(S'',F'')$ is a $\psi$-approximation can be verified easily. We will only focus on enumerating the number of such pairs for a given $F$. Every pair is determined completely by the set of vertices $u$ chosen in the first loop and the set of vertices $w$ chosen in the second.

First, we observe that before the first loop, $td = \nabla(G, [A]^c) \geq |G \setminus F'| \cdot (d - \varphi)$, and so $|G \setminus F'| \leq \frac{td }{d - \varphi}$. So in the first loop, each $u \in \mathcal{Q}$ removes at least $\psi$ vertices from this set, and therefore, the first loop is run for at most $\frac{t d}{\psi \cdot (d - \varphi)}$ times. Moreover, we have 
\[
\mathcal{Q} \subseteq N(G \setminus F') \subseteq N(N^2(F') \setminus F') \subseteq N(N^2(G) \setminus F')
\] 

where the second containment follows since $F' \supseteq F$. So by Theorem~\ref{thm:dvs2d}, using the fact that $N^2(G) = N^3(A)$, we have 
\[
|\mathcal{Q}| \leq d(|N^2(G)| - |F'|) \leq O(td^4).
\] 
Therefore, the number of ways of choosing the vertices $u$ from $N(N^2(F')\setminus F')$ in the first loop is at most 
\[
\binom{O(td^4)}{\leq \frac{td}{\psi \cdot (d -  \varphi)}} \leq 2^{O\left(\frac{t d \log d}{\psi \cdot (d - \varphi)}\right)}.
\]

Next, we observe that before the second loop, $td = \nabla(G,[A]^c) \geq |S'' \setminus A| (d - \psi)$ and so $|S''\setminus A| \leq td/(d - \psi)$. So in the second loop, each $w \in \mathcal{Q}'$ removes at least $\psi$ vertices from this set, and therefore, the second loop is run for at most $\frac{td}{\psi(d - \psi)}$ times. Moreover, we have
\[
\mathcal{Q}' \subseteq N^2(G \setminus F') \subseteq N^2(N^2(F') \setminus F') \subseteq N^2(N^2(G) \setminus F')
\]
Where the second containment follows since $F'' \supseteq F$. So by Theorem~\ref{thm:dvs2d}, using the fact that $N^2(G) = N^3(A)$, we have 
\[
|\mathcal{Q}'| \leq d^2(|N^2(G)| - |F'|) \leq O(td^5).
\] 
Therefore, the number of ways of choosing the vertices $w$ from $N^2(N^2(F')\setminus F')$ in the first loop is at most 
\[
\binom{O(td^5)}{\leq \frac{td}{\psi (d - \psi)}} \leq 2^{O\left(\frac{t d \log d}{\psi(d - \psi)}\right)}.
\]

which completes the proof.

\section{Acknowledgements}
We would like to thank Will Perkins for various insightful conversations on the topic. We are also grateful to Dhruv Mubayi, who asked the question on the number of independent sets in Cayley graphs on $\mathbb{Z}_{2n}$ (see Appendix B), which essentially motivated our research on this problem.

\bibliography{references}
\bibliographystyle{alpha}

\appendix

\section{Connected graphs with many independent sets}
\label{sec:conn}

In this section, we sketch the construction of a $d$-regular graph on $2n$ vertices which is $(d-1)$-connected and has $2^{n + \Omega(n/d)}$ independent sets.

Let $n$ and $d$ be such that $(4d-2)|n$ and let $t := \frac{n}{4d - 2}$. For $i \in \left[t\right]$ let $H_i$ be a bipartite graph with bipartition $(X_i,Y_i \sqcup Z_i)$ where each $|X_i| = 2d - 2$, and each $|Y_i| = |Z_i| = d$ such that the following properties hold for each $i$:

\begin{itemize}
\item[1.] $d(u) = d$ for $u \in X_i$.
\item[2.] $d(v) = d - 1$ for $v \in Y_i \cup Z_i$.
\item[3.] $H_i$ is $(d-1)$-connected.
\end{itemize}

Let $G$ be a graph obtained by placing a matching between $Z_i$ and $Y_{(i+1) \mod t}$ for each $i \in \left[t\right]$. \\

We have that $G$ is a $d$-regular $(d-1)$-connected bipartite graph on $2n$ vertices. 

Let $L_i := X_i$ if $i$ is odd and $Y_i \cup Z_i$ if $i$ is even. Similarly, let $R_i := Y_i \cup Z_i$ if $i$ is odd and $X_i$ if $i$ is even.

We say \emph{interval} of $[t]$ to mean a subset of consecutive integers. Let $S\subseteq [t]$ be a collection of $c$ distinct intervals where each interval starts and ends on a distinct odd number. Suppose that $c = \delta t$ for some fixed (TBD) constant $\delta$. Define

\[
M(S) := \bigcup_{i \in S}L_i \cup \bigcup_{i \not \in S}R_i
\]

\begin{claim}For each such $S$, $M(S)$
 is a maximal independent set of size at least $n - 2c$. 
\end{claim}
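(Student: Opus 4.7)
The plan is to verify the three parts of the claim—independence, maximality, and size—in sequence. First, for independence within a single block, $M(S) \cap V(H_i) \in \{L_i, R_i\}$ is one side of the bipartition of $H_i$ and hence automatically independent in $H_i$. The only remaining edges of $G$ are the $d$ matching edges between $Z_i$ and $Y_{i+1 \bmod t}$, and I would check by a short case analysis that at least one endpoint of each such matching edge is absent from $M(S)$. The key structural input is that every interval of $S$ begins and ends on an odd index, so any toggle of $S$-membership between consecutive indices $i$ and $i+1$ forces at least one of them to be odd; combined with the parity-dependent definitions (so that $L_i = X_i$ and $Z_i \subseteq R_i$ for odd $i$, and the reverse for even $i$), this forces one of $Z_i, Y_{i+1}$ out of $M(S)$ in each of the four cases: both in $S$, both outside $S$, start of an interval, and end of an interval.

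Maximality follows locally. Any $v \notin M(S)$ sits in some $H_i$ on the side of the bipartition opposite $M(S) \cap V(H_i)$. Since $H_i$ is $(d-1)$-connected with $d \geq 2$, the vertex $v$ has positive degree in $H_i$, and all its $H_i$-neighbors lie on the opposite side, which is exactly $M(S) \cap V(H_i)$. So $v$ has a neighbor in $M(S)$.

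For the size, I would write $|M(S)| = \sum_{i \in S}|L_i| + \sum_{i \notin S}|R_i|$ and split by the parity of $i$, using $|L_i| = 2d-2$, $|R_i| = 2d$ for odd $i$ and the reverse for even $i$. Collecting terms yields
\[
|M(S)| = t(2d-1) + 2\bigl(|S_e| - |S_o|\bigr),
\]
where $S_o, S_e$ are the odd and even elements of $S$. Each of the $c$ disjoint intervals of $S$ has odd length (both endpoints odd), so contains exactly one more odd index than even index, giving $|S_o| - |S_e| = c$. Under the parameterization $t(4d-2) = 2n$, this gives $|M(S)| = n - 2c$.

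The main obstacle is the bookkeeping in the independence step, where one must simultaneously track the parity of $i$, the $S$-membership of $i$ and $i+1$, and which bipartition class of $H_i$ contains $Z_i$ and which contains $Y_{i+1}$. Implicit in the construction is that $t$ should be taken even, so that $G$ is globally bipartite through the wraparound edges $Z_t$–$Y_1$; I would flag this as a standing parity assumption rather than expand the wraparound case separately.
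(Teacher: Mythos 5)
Your proof is correct and follows essentially the same approach as the paper, which simply observes that $N\left(\bigcup_{i \in C}L_i\right) = \bigcup_{i\in C}R_i$ for each interval $C$ (so independence and maximality follow from the global bipartition into the $L$-side and $R$-side) and that each interval's $R$-union has exactly two more vertices than its $L$-union; you have filled in the per-edge parity bookkeeping and the degree-based maximality check that the paper leaves implicit. Two small points you correctly flagged: the global bipartition $(\bigcup_i L_i, \bigcup_i R_i)$ (and hence the wraparound matching $Z_t$--$Y_1$) requires $t$ to be even, and the size arithmetic works out to $n-2c$ only under the normalization $t(4d-2)=2n$ (i.e.\ $(2d-1)\mid n$ and $t=n/(2d-1)$), which matches the appendix's opening promise of a $2n$-vertex graph even though its displayed definition $t = n/(4d-2)$ gives only $n$ vertices.
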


\begin{proof}
Let $C \subset S$ be an interval that starts and ends on an odd number, and let $X = \bigcup_{i \in C}L_i$. From the construction of $G$, have $N(X) = \bigcup_{i \in C}R_i$, and so
\[
N\left(\bigcup_{i \in S}L_i\right) = \bigcup_{i \in S}R_i.
\]
Therefore, $\bigcup_{i \in S}L_i \cup \bigcup_{i \not \in S}R_i$ is a maximal independent set.

Moreover, from construction, we have $|N(X)| = |X| + 2$. Therefore, 
\[
\left|N\left(\bigcup_{i \in S}L_i\right)\right| = \left|\bigcup_{i \in S}L_i\right| + 2c,
\]
and so $\left|\bigcup_{i \not \in S}R_i\right| = n - \left|\bigcup_{i \in S}L_i\right| - 2c$, which completes the proof.
\end{proof}
 
 There are at least $\binom{t/2}{2c} \geq 2^{\Omega(\log(1/\delta))c}$ such $S$'s obtained by choosing the endpoints of the $c$ intervals. Let
 
 \[
 I(S) := \{I \in \mathcal{I}(G)~|~\forall i \in S,~L_i \cap I \neq \emptyset~\text{and }\forall i \not \in S,~R_i \cap I \neq \emptyset\}.
 \]
 
 For distinct $S_1$, $S_2$, we have that $I(S_1) \cap I(S_2) = \emptyset$, and so

\[
i(G) \geq \sum_{S}|I(S)|.
\]

 With this in mind, we have
 \begin{align*}
 |I(S)| & = \prod_{i \in S}\left(2^{|L_i|} - 1\right) \cdot \prod_{i \not \in S}\left(2^{|R_i|} - 1\right) \\
 & = \prod_{i \in S}2^{|L_i|}\left(1  - \frac{1}{2^{|L_i|}}\right) \cdot \prod_{i \not \in S}2^{|R_i|}\left(1 - \frac{1}{2^{|R_i|}}\right) \\
 & \geq \left(\prod_{i \in S}2^{|L_i|} \cdot \prod_{i \not \in S}2^{|R_i|}\right) \cdot  \left(1 - \frac{1}{2^{d}}\right)^{t} \\
 & \geq 2^{n - 2c - O\left(\frac{t}{2^d}\right)}
 \end{align*}
distinct independent sets for a small enough $\delta$, and so 

\begin{align*}
i(G) & \geq \sum_{S}|I(S)| \\
& \geq 2^{\Omega(\log (1/\delta) c)} \cdot 2^{n - 2c - O\left(\frac{t}{2^d}\right)} \\
& = 2^{n + \Omega(c)} \\
& = 2^{n + \Omega\left(\frac{n}{d}\right)}
\end{align*}
for a small enough $\delta$.

\section{Cayley graphs on $\mathbb{Z}_{2n}$}
\label{sec:appendix}

Here, we describe a Cayley graph on $2n$ vertices, degree $(2 - o(1)) \log n$ and $\omega(2^n)$ independent sets.

Fix an $\epsilon > 0$. Take $\Gamma$ to be a Cayley graph over $\mathbb{Z}_{2n}$ with the generator set $\{-d + 2i~|~ 0 \leq i \leq d\}$ for any odd integer $d \geq (1 + \epsilon)\log n$. Let $X$ and $Y$ be the sides of the bipartite graph, with $|X| = |Y| = n$. Observe that $X$ and $Y$ are the cosets of the subgroup of order $n$, i.e., $X$ and $Y$ partition $\mathbb{Z}_{2n}$ into `even' and `odd' elements respectively.

Observe that for each small $2$-linked set $A \subseteq X$, we have that $G = N(A)$ is just an arithmetic progression of common difference $2$. Thus, we can define the \emph{start} and \emph{end} of $G$ as the first and the $|G|$'th element respectively in this progression. Moreover, $|G| = |[A]| + d$. 

Enumerating the number of small $2$-linked $A$'s such that $|[A]| = a$ and $|G| = g$ where $g - a = d$ can be done as follows: Let $u,v \in [A]$ be the vertices that cover the \emph{start} and \emph{end} of $G$. Observe that every vertex in $G \setminus (N(u) \cup N(v))$ has at least $d$ neighbors in $[A]$. Thus a uniformly random subset of $[A]$ covers $G \setminus (N(u) \cup N(v))$ with probability at least $(1 - n\cdot 2^{-d}) = (1 - o(1))$. Thus at least $(1/4 - o(1))$ fraction of subsets of $[A]$ have $G$ as their neighborhood. Thus we have

 \begin{equation*}
  \mathcal{G}(a,g) = \begin{cases}
        \Theta\left(n \cdot 2^{g - d}\right) & \text{if $g-a = d$}\\
        0 &\text{otherwise}.
        \end{cases}
 \end{equation*}

One may verify that plugging this bound in the proof of Theorem~\ref{thm:main} gives that $i(\Gamma) \leq 2^{n+1}(1 + o(1))$ whenever $d\geq (2 + \epsilon)\log n$.

When $d \leq (2 -\epsilon) \log n$ we have:

\begin{align*}
i(\Gamma) & \geq \sum_{A \subseteq X,~\text{small}}2^{n - |N(A)|} \\
& = 2^{n}\sum_{A \subseteq X,~\text{small}}2^{-N(A)} \\
& \geq 2^{n}\left(1 + \sum_{\substack{\emptyset \neq A \subseteq X,~\text{small} \\2-\text{linked}}}2^{-N(A)} \right)\\
& \geq \Omega\left( 2^n \cdot \left(\sum_{g = d}^n n\cdot 2^{- d}\right)\right) \\
& = \omega\left(2^n\right).
\end{align*}

\end{document}